\begin{document}

\title{Dual Acceleration for Minimax Optimization: Linear Convergence Under Relaxed Assumptions}

\author{Jingwang Li and Xiao Li, \IEEEmembership{Member, IEEE}
  \thanks{Jingwang Li was with the School of Data Science, The Chinese University of Hong Kong, Shenzhen 518172, China. He is now with the Department of Electronic and Computer Engineering, The Hong Kong University of Science and Technology, Hong Kong 99077, China (e-mail: jingwang.li@connect.ust.hk).}
  \thanks{Xiao Li is with the School of Data Science, The Chinese University of Hong Kong, Shenzhen 518172, China (e-mail: lixiao@cuhk.edu.cn).}
  \thanks{\textit{Corresponding author: Xiao Li.}}
}

\maketitle

\begin{abstract}
  This paper addresses the bilinearly coupled minimax optimization problem: $\min_{x \in \mathbb{R}^{d_x}}\max_{y \in \mathbb{R}^{d_y}} \ f_1(x) + f_2(x) + y^{\top} Bx - g_1(y) - g_2(y)$, where $f_1$ and $g_1$ are smooth convex functions, $f_2$ and $g_2$ are potentially nonsmooth convex functions, and $B$ is a coupling matrix. Existing algorithms for solving this problem achieve linear convergence only under stronger conditions, which may not be met in many scenarios. We first introduce the Primal-Dual Proximal Gradient (PDPG) method and demonstrate that it converges linearly under an assumption where existing algorithms fail to achieve linear convergence. Building on insights gained from analyzing the convergence conditions of existing algorithms and PDPG, we further propose the inexact Dual Accelerated Proximal Gradient (iDAPG) method. This method achieves linear convergence under weaker conditions than those required by existing approaches. Moreover, even in cases where existing methods guarantee linear convergence, iDAPG can still provide superior theoretical performance in certain scenarios.
\end{abstract}

\begin{IEEEkeywords}
  Minimax optimization, accelerated algorithms, inexact methods, linear convergence.
\end{IEEEkeywords}

\IEEEpeerreviewmaketitle

\section{Introduction}

\begin{table}[h]
  \caption{The oracle complexities of SOTA first-order algorithms to solve different cases of \cref{main_pro}, along with the corresponding lower bounds (if available).}
  \label{1229}
  \scriptsize
  \renewcommand{\arraystretch}{0.5}
  \centering
    \begin{threeparttable}[b]
      \begin{tabularx}{\textwidth}{ccC}
        \toprule
                                                             & Additional assumptions & Oracle complexity\tnote{1}                                                                                                                                                                                        \\
        \midrule
        \multicolumn{3}{c}{\textbf{Strongly-Convex-Strongly-Concave Case: \cref{basic_assump}, $g_1$ is $\mu_y$-strongly convex}}                                                                                                                                                                                 \\
        \midrule
        LPD \cite{thekumparampil2022lifted}, ABPD-PGS \cite{luo2024accelerated}                  &                        & $\bO{\max\pa{\sqrt{\kappa_x}, \sqrt{\kappa_{xy}}, \sqrt{\kappa_y}} \log \pa{\frac{1}{\epsilon}}}$                                                                                 \\
        \midrule
        APDG \cite{kovalev2022accelerated}                   &  $f_2 = 0$, $g_2 = 0$  & $\bO{\max\pa{\sqrt{\kappa_x}, \sqrt{\kappa_{xy}}, \sqrt{\kappa_y}} \log \pa{\frac{1}{\epsilon}}}$ \\
        \midrule
        \rowcolor{bgcolor}
        iDAPG                                          &                        & \makecell{$\mathcal{A}$: $\tilde{\mO}\pa{\sqrt{\kappa_x}\max\pa{\sqrt{\kappa_{xy}}, \sqrt{\kappa_y}} \log \pa{\frac{1}{\epsilon}}}$ \tnote{2}                                               \\ $\mathcal{B}$: $\bO{\max\pa{\sqrt{\kappa_{xy}}, \sqrt{\kappa_y}} \log \pa{\frac{1}{\epsilon}}}$}                        \\
        \midrule
        Lower bound\tnote{3} \cite{zhang2022lower}                    & $f_2 = 0$, $g_2 = 0$   & $\low{\max\pa{\sqrt{\kappa_x}, \sqrt{\kappa_{xy}}, \sqrt{\kappa_y}} \log \pa{\frac{1}{\epsilon}}}$                                                                                      \\
        \midrule
        \multicolumn{3}{c}{\textbf{Strongly-Convex-Full-Rank Case: \cref{basic_assump}, $f_2 = 0$, $B$ has full row rank}}                                                                                                                                                                                        \\
        \midrule
        APDG \cite{kovalev2022accelerated}                   & $g_2 = 0$              & $\bO{\max\pa{\sqrt{\kappa_{xy'}}, \sqrt{\kappa_x\kappa_B}, \kappa_B} \log \pa{\frac{1}{\epsilon}}}$                                                         \\
        \midrule
        \rowcolor{bgcolor}
        iDAPG                                          &                        & \makecell{$\mathcal{A}$: $\tilde{\mO}\pa{\sqrt{\kappa_x}\max\pa{\sqrt{\kappa_{xy'}}, \sqrt{\kappa_x\kappa_B}} \log \pa{\frac{1}{\epsilon}}}$                          \\ $\mathcal{B}$: $\bO{\max\pa{\sqrt{\kappa_{xy'}}, \sqrt{\kappa_x\kappa_B}} \log \pa{\frac{1}{\epsilon}}}$}                        \\
        \midrule
        \multicolumn{3}{c}{\textbf{Strongly-Convex-Linear Case: \cref{basic_assump}, $f_2 = 0$, $g_2 = 0$, $g_1$ is linear}}                                                                                                                                                                           \\
        \midrule
        Algorithm 1 \cite{salim2022optimal}               &                        & \makecell{$\mathcal{A}$: $\bO{\sqrt{\kappa_x} \log \pa{\frac{1}{\epsilon}}}$, $\mathcal{B}$: $\bO{\sqrt{\kappa_x\kappa_{B'}} \log \pa{\frac{1}{\epsilon}}}$}                          \\
        \midrule
        APDG \cite{kovalev2022accelerated}                   &                        & $\bO{\sqrt{\kappa_x\kappa_{B'}} \log \pa{\frac{1}{\epsilon}}}$
        \\
        \midrule
        \rowcolor{bgcolor}
        iDAPG                                          &                        & \makecell{$\mathcal{A}$: $\tilde{\mO}\pa{\kappa_x\sqrt{\kappa_{B'}} \log \pa{\frac{1}{\epsilon}}}$, $\mathcal{B}$: $\bO{\sqrt{\kappa_x\kappa_{B'}} \log \pa{\frac{1}{\epsilon}}}$} \\
        \midrule
        Lower bound \cite{salim2022optimal}                  &                        & $\mathcal{A}$: $\low{\sqrt{\kappa_x} \log \pa{\frac{1}{\epsilon}}}$, $\mathcal{B}$: $\low{\sqrt{\kappa_x\kappa_{B'}} \log \pa{\frac{1}{\epsilon}}}$                                   \\
        \midrule
        \multicolumn{3}{c}{\textbf{The Case that Satisfies \cref{basic_assump,main_pro_assumption}}}                                                                                                                                                                                \\
        \midrule
        \rowcolor{bgcolor}
        PDPG        & $g_3 = 0$              & $\bO{\max\pa{\kappa_{xy^2}, \kappa_x\kappa_{xy^3}} \log \pa{\frac{1}{\epsilon}}}$                                                                           \\
        \midrule
        \rowcolor{bgcolor}
        iDAPG &                        & \makecell{$\mathcal{A}$: $\tilde{\mO}\pa{\sqrt{\kappa_x}\max\pa{\sqrt{\kappa_{xy^2}}, \sqrt{\kappa_x\kappa_{xy^3}}} \log \pa{\frac{1}{\epsilon}}}$ \\ $\mathcal{B}$: $\bO{\max\pa{\sqrt{\kappa_{xy^2}}, \sqrt{\kappa_x\kappa_{xy^3}}} \log \pa{\frac{1}{\epsilon}}}$}                        \\
        \midrule
        \multicolumn{3}{c}{\textbf{Dual-Strongly-Convex Case: \cref{basic_assump}, $\varphi$ is $\mu_{\varphi}$-strongly convex}}                                                                                                                                     \\
        \midrule
        \rowcolor{bgcolor}
        iDAPG &                        & \makecell{$\mathcal{A}$: $\tilde{\mO}\pa{\sqrt{\kappa_x}\max\pa{\sqrt{\frac{L_y}{\mu_{\varphi}}}, \frac{\os(B)}{\sqrt{\mu_x\mu_{\varphi}}}} \log \pa{\frac{1}{\epsilon}}}$                            \\ $\mathcal{B}$: $\bO{\max\pa{\sqrt{\frac{L_y}{\mu_{\varphi}}}, \frac{\os(B)}{\sqrt{\mu_x\mu_{\varphi}}}} \log \pa{\frac{1}{\epsilon}}}$}                        \\        
        \bottomrule
      \end{tabularx}
      \begin{tablenotes}
        \item $\kappa_x = \frac{L_x}{\mu_x}$, $\kappa_y = \frac{L_y}{\mu_y}$, $\kappa_{B} = \frac{\os^2(B)}{\mins^2(B)}$, $\kappa_{B'} = \frac{\os^2(B)}{\us^2(B)}$, $\kappa_{xy} = \frac{\os^2(B)}{\mu_x\mu_y}$, $\kappa_{xy'} = \frac{L_xL_y}{\mins^2(B)}$, $\kappa_{xy^2} = \frac{L_xL_y}{\mine{BB\T + L_xP}}$, $\kappa_{xy^3} = \frac{\os^2(B)}{\mine{BB\T + L_xP}}$.
        \item [1] $\mathcal{A}$: $\nabla f_1$ and $\prox_{f_2}$; $\mathcal{B}$: $B$, $B\T$, $\nabla g_1$, and $\prox_{g_2}$. If only one complexity is provided, it suggests that the oracle complexities of $\mathcal{A}$ and $\mathcal{B}$ are the same.
        \item [2] $\tilde{\mO}$ hides a logarithmic factor that depends on the problem parameters; refer to \cref{iDAPG_complexity} for further details.
        \item [3] When only one lower bound is provided, it suggests that only the lower bound of the maximum of the oracle complexities of $\mathcal{A}$ and $\mathcal{B}$ is available.
      \end{tablenotes}
    \end{threeparttable}
\end{table}

In this paper, we consider the following minimax optimization problem:
\begin{equation} \label{main_pro} \tag{P1}
  \min_{x \in \R^{d_x}}\max_{y \in \R^{d_y}} \ \cL(x, y) = f_1(x) + f_2(x) + y\T Bx - g_1(y) - g_2(y),
\end{equation}
where $f_1:\R^{d_x} \rightarrow \R$ and $g_1:\R^{d_y} \rightarrow \R$ are smooth and convex functions, $f_2:\R^{d_x} \rightarrow \exs$ and $g_2:\R^{d_y} \rightarrow \exs$ are convex but possibly nonsmooth functions, and $B \in \R^{d_y \times d_x}$ is a coupling matrix. Note that a solution to \cref{main_pro} corresponds to a saddle point of $\cL$. Without loss of generality, we assume that there exists at least one solution $(x^*, y^*)$ to \cref{main_pro}.

The general minimax formulation in \cref{main_pro} finds broad applications across machine learning and optimization, including robust optimization \cite{ben2009robust}, reinforcement learning \cite{du2019linear}, supervised learning \cite{kovalev2022accelerated}, and so on. To illustrate this versatility, consider empirical risk minimization (ERM) with generalized linear models (GLMs) \cite{hardin2018generalized}. Given a dataset $X \in \R^{p \times d}$ with $p$ samples and $d$ features, the ERM problem can be formulated as
\eqe{ \label{1255}
  \min_{\theta \in \R^d} f(\theta) + g(\theta) + \ell(X\theta),
}
where $\theta \in \R^d$ is the model parameter, $\ell: \R^p \rightarrow \exs$ is a potentially nonsmooth convex loss function, $f: \R^d \rightarrow \R$ is a convex and smooth function (e.g., $\ell_2$ regularization), and $g: \R^d \rightarrow \exs$ is a potentially nonsmooth convex function (e.g., $\ell_1$ regularization or indicator functions). In the case of linear regression, the loss function is given by $\ell(z) = \frac{1}{2p}\norm{z-y}^2$; for logistic regression, it is expressed as $\ell(z) = \frac{1}{p}\sum_{j=1}^p\log\pa{1+e^{-y_jz_j}}$ ($y_j \in \set{1,-1}$). Instead of solving \cref{1255} directly, we can also address its equivalent minimax problem:
\eqe{ \label{1256}
  \min_{\theta \in \R^d} \max_{\lambda \in \R^p} f(\theta) + g(\theta) + \lambda\T X\theta - \ell^*(\lambda),
}
which is clearly a special case of \cref{main_pro}. This minimax formulation is advantageous in many scenarios, such as when it allows for a finite-sum structure \cite{wang2017exploiting} or introduces a sparsity structure \cite{lei2017doubly}.

In this work, we focus on designing accelerated first-order algorithms to solve \cref{main_pro} and achieve linear convergence, under the assumption that at least one of $f_1$, $f_2$, $g_1$, and $g_2$ is strongly convex. The primal-dual hybrid gradient (PDHG) method, one of most popular first-order algorithms for solving \cref{main_pro}, has established linear convergence as early as \cite{chambolle2011first, chambolle2016ergodic}, provided that both $f_2$ and $g_2$ are strongly convex. Under a similar condition where both $f_1$ and $g_1$ are strongly convex, several accelerated algorithms have been proposed that demonstrate faster linear convergence rates \cite{thekumparampil2022lifted, kovalev2022accelerated, luo2024accelerated}.

The first attempt to relax the strong convexity condition was made in \cite{duchi2011dual}, where the linear convergence of PDHG is established by replacing the strong convexity of $g_1$ with the condition that $f_2=0$ and $A$ has full row rank. Under this condition, several accelerated algorithms have been proposed in \cite{zhang2022near, kovalev2022accelerated}, leading to improved linear convergence rates. Another case of \cref{main_pro} that allows existing algorithms to achieve linear convergence without requiring both primal and dual strong convexity is the linearly constrained optimization problem: $\min_{x \in \R^d} f(x) \ \st \ A x = b$, where $f$ is smooth and strongly convex. This problem is special case of \cref{main_pro} with $f_1 = f$, $f_2 = 0$, $g_2 = 0$, and $g_1(y) = b\T y$. For this linearly constrained optimization problem, the algorithm proposed in \cite{salim2022optimal} has been shown to achieve linear convergence and match the lower complexity bound.

In summary, when either $f_1$ or $f_2$ is strongly convex, existing algorithms for solving \cref{main_pro} achieve linear convergence only when at least one of the following additional conditions is met:
\begin{enumerate}
  \item $g_1$ or $g_2$ is strongly convex (PDHG \cite{chambolle2011first,chambolle2016ergodic}, LPD \cite{thekumparampil2022lifted}, ABPD-PGS \cite{luo2024accelerated}, APDG \cite{kovalev2022accelerated});
  \item $f_2 = 0$ and $g_2 = 0$, and $B$ has full row rank (PDHG \cite{du2019linear}, APDG);
  \item $f_2 = 0$ and $g_2 = 0$, and $g_1$ is a linear function (Algorithm 1 of \cite{salim2022optimal}, APDG).
\end{enumerate}
To the best of our knowledge, no existing algorithm achieves linear convergence under conditions weaker than those listed above. However, certain real-world problems may not satisfy these conditions, raising an important question:

Can we design algorithms capable of solving \cref{main_pro} and achieving linear convergence under weaker conditions than those currently required?

\textbf{This work provides a definitive resolution to the aforementioned question through two key contributions:}
\begin{enumerate}
  \item We first propose PDPG, an extension of Algorithm 1 introduced in \cite{chambolle2016ergodic}. We prove that PDPG converges linearly under \cref{main_pro_assumption}, whereas existing methods fail to guarantee linear convergence under this assumption.
  \item Building on insights gained from analyzing the convergence conditions of existing algorithms and PDPG, we further propose iDAPG, which achieves linear convergence under weaker conditions than those required by existing algorithms. Notably, even in cases where existing methods guarantee linear convergence, iDAPG can still provide superior theoretical performance in certain scenarios; see \cref{1229} for detailed comparisons.
\end{enumerate}

\textit{Notations:} We use the standard inner product $\dotprod{\cdot, \cdot}$ and the standard Euclidean norm $\norm{\cdot}$ for vectors, along with the standard spectral norm $\norm{\cdot}$ for matrices.
For a symmetric matrix $A \in \R^{n \times n}$, let $\mine{A}$ and $\ove{A}$ denote the smallest and largest eigenvalues of $A$, respectively. We denote $A > 0$ (or $A \geq 0$) to indicate that $A$ is positive definite (or positive semi-definite). For a matrix $B \in \R^{m \times n}$, let $\mins(B)$, $\us(B)$, and $\os(B)$ represent the smallest singular value, the smallest nonzero singular value, and the largest singular values of $B$, respectively.
For a function $f: \R^n \rightarrow \exs$, $S_f(x)$ denotes one of its subgradients at $x$, $\partial f(x)$ denotes its subdifferential at $x$. The proximal operator of $f$ is given by $\prox_{\alpha f}(x) = \arg\min_{y} f(y) + \frac{1}{2\alpha}\norm{y-x}^2$ for $\alpha > 0$. Additionally, the Fenchel conjugate of $f$ is defined as $f^*(y) = \sup_{x \in \R^n}y\T x-f(x)$.

\section{Linear Convergence of PDPG under \cref{basic_assump,main_pro_assumption}}

\begin{algorithm}[t]
  \caption{Primal-Dual Proximal Gradient Method (PDPG)}
  \label{alg:PDPG}
  \begin{algorithmic}[1]
    \Require
    $T>0$, $\alpha>0$, $\beta>0$, $\theta \geq 0$, $x^0$, $y^0$
    \Ensure
    $x^T$, $y^T$
    \For {$k=0,\dots, T-1$ }
    \State $x\+ = \prox_{\alpha f_2}\pa{x^k - \alpha\pa{\nabla f_1(x^k) + B\T y^k}}$
    \State \eqe{y\+ = &\prox_{\beta g_2}\Big(y^k \\&- \beta\pa{\nabla g_1(y^k) - B\pa{x\+ + \theta(x\+-x^k)}}\Big) \nonumber}
    \EndFor
  \end{algorithmic}
\end{algorithm}

\begin{algorithm}[t]
  \caption{Inexact Dual Accelerated Proximal Gradient Method (iDAPG)}
  \label{alg:iDAPG}
  \begin{algorithmic}[1]
    \Require
    $T>0$, $L_{\varphi}$, $\mu_{\varphi}$, $x^0$, $y^0$
    \Ensure
    $x^T$, $y^T$
    \State $z^0 = y^0$
    \State Set $\beta_k = \frac{\sqrt{\kappa_{\varphi}}-1}{\sqrt{\kappa_{\varphi}}+1}$ if $\mu_{\varphi} > 0$, where $\kappa_{\varphi} = \frac{L_{\varphi}}{\mu_{\varphi}}$; otherwise set $\beta_k = \frac{k}{k+3}$.
    \For {$k=0,\dots, T-1$ }
    \State Solve
    \eqe{ \label{1356}
      \min_{x \in \R^{d_x}} f_1(x) + f_2(x) + \dotprod{B\T z^k, x}
    }
    to obtain an inexact solution $x\+$.
    \State $y\+ = \prox_{\frac{1}{L_{\varphi}} g_2}\pa{z^k - \frac{1}{L_{\varphi}}\lt(\nabla g_1(z^k) - Bx\+\rt)}$
    \State $z\+ = y\+ + \beta_k\lt(y\+-y^k\rt)$
    \EndFor
  \end{algorithmic}
\end{algorithm}

Throughout this paper, we assume that the following assumption holds:
\begin{assumption} \label{basic_assump}
  $f_1$, $f_2$, $g_1$, and $g_2$ satisfy
  \begin{enumerate}
    \item $f_1$ is $\mu_x$-strongly convex and $L_x$-smooth with $L_x \geq \mu_x > 0$;
    \item $g_1$ is convex and $L_y$-smooth with $L_y \geq 0$;
    \item $f_2$ and $g_2$ are proper\footnote{We say a function $f$ is proper if $f(x)>-\infty$ for all $x$ and $\dom{f} \neq \emptyset$.} convex, lower semicontinuous and proximal-friendly\footnote{We say a function $f$ is proximal-friendly if $\prox_{\alpha f}(x)$ can be easily computed for any $x$.}.
  \end{enumerate}
\end{assumption}

To address the aforementioned question, we begin with the following assumption:
\begin{assumption} \label{main_pro_assumption}
  $f_2 = 0$, $g_1(y) = g_3(y) + \frac{1}{2}y\T Py + y\T b$, where $g_3$ is a smooth convex function and $P \geq 0$. Furthermore, $BB\T + cP > 0$ for any $c > 0$.
\end{assumption}
It is evident that none of the aforementioned linear convergence conditions is satisfied under \cref{main_pro_assumption}. Nevertheless, we demonstrate that PDPG, an extension of Algorithm 1 introduced in \cite{chambolle2016ergodic}, achieves linear convergence under \cref{main_pro_assumption}.

\begin{theorem} \label{PDPG_convergence}
  Assume that \cref{basic_assump,main_pro_assumption} holds, $g_3 = 0$, $\theta = 0$, and
  \eqe{ \label{1428}
    \alpha < \frac{1}{L_x}, \ \beta \leq \frac{\mu_x}{\os^2(B)+\mu_x\ove{P}}.
  }
  Then, $x^k$ and $y^k$ generated by PDPG satisfy
  \eqe{
    &c_x\norm{x^k-x^*}^2 + c_y\norm{y^k-y^*}^2 \\
    \leq &\delta^k\pa{c_x\norm{x^0-x^*}^2 + c_y\norm{y^0-y^*}^2}, \ \forall k \geq 0,
  }
  where $(x^*, y^*)$ is the unique solution of \cref{main_pro}, $c_x = 1-\frac{\alpha\beta\os^2(B)}{1-\beta\ove{P}}$, $c_y = \frac{\alpha}{\beta}$, and
  $$\delta = 1 - \min\lt\{\alpha\mu_x(1-\alpha L_x), \alpha\beta\mine{BB\T + \frac{1}{\alpha}P}\rt\} \in (0, 1).$$
\end{theorem}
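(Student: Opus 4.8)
The plan is to prove a one-step contraction $V\+ \leq \delta V^k$ of the Lyapunov function $V^k = c_x\norm{x^k-x^*}^2 + c_y\norm{y^k-y^*}^2$, after which the stated bound follows by induction on $k$. I would first specialize the iteration: since $f_2 = 0$, the primal update is the exact gradient step $x\+ = x^k - \alpha\pa{\nabla f_1(x^k) + B\T y^k}$; since $g_3 = 0$ we have $\nabla g_1(y) = Py + b$; and $\theta = 0$ drops the extrapolation, so $y\+ = \prox_{\beta g_2}\pa{y^k - \beta\pa{Py^k + b - Bx\+}}$. I would record the optimality conditions $\nabla f_1(x^*) + B\T y^* = 0$ and $Bx^* - Py^* - b \in \partial g_2(y^*)$, and note that uniqueness of $(x^*, y^*)$ follows from the strong convexity of $f_1$ together with \cref{main_pro_assumption}, which renders the associated dual objective strongly convex.

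For the primal part, I would take the inner product of the identity $x^k - x\+ = \alpha\pa{\nabla f_1(x^k) + B\T y^k}$ with $2(x\+ - x^*)$ and apply the three-point identity to the left-hand side. Substituting $B\T y^k = B\T(y^k - y^*) - \nabla f_1(x^*)$ isolates the coupling term $-2\alpha\dotprod{y^k - y^*, B(x\+ - x^*)}$ and the gradient term $-2\alpha\dotprod{u^k, x\+ - x^*}$, where $u^k := \nabla f_1(x^k) - \nabla f_1(x^*)$. The crucial manoeuvre is to insert $x\+ - x^* = (x^k - x^*) - \alpha\pa{u^k + B\T(y^k - y^*)}$ into $-\norm{x^k - x\+}^2 = -\alpha^2\norm{u^k + B\T(y^k - y^*)}^2$; upon expansion the mixed terms $\dotprod{u^k, B\T(y^k - y^*)}$ cancel, leaving the decisive $-\alpha^2\norm{B\T(y^k - y^*)}^2$ alongside $-2\alpha\dotprod{u^k, x^k - x^*} + \alpha^2\norm{u^k}^2$. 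Bounding the last pair with co-coercivity $\norm{u^k}^2 \leq L_x\dotprod{u^k, x^k - x^*}$ and strong convexity $\dotprod{u^k, x^k - x^*} \geq \mu_x\norm{x^k - x^*}^2$, and using $\alpha < 1/L_x$, gives a primal contribution of at most $-\alpha(2 - \alpha L_x)\mu_x\norm{x^k - x^*}^2$.

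For the dual part, I would combine the prox optimality $\frac{1}{\beta}(y^k - y\+) - Py^k - b + Bx\+ \in \partial g_2(y\+)$ with monotonicity of $\partial g_2$ against the subgradient at $y^*$, and the three-point identity for $\frac{1}{\beta}\dotprod{y^k - y\+, y\+ - y^*}$, yielding $\norm{y\+ - y^*}^2 \leq \norm{y^k - y^*}^2 - \norm{y^k - y\+}^2 - 2\beta\dotprod{P(y^k - y^*), y\+ - y^*} + 2\beta\dotprod{B(x\+ - x^*), y\+ - y^*}$. The key trick is to expand the $P$-term by the symmetric three-point identity $2\dotprod{P(y^k - y^*), y\+ - y^*} = (y^k - y^*)\T P(y^k - y^*) + (y\+ - y^*)\T P(y\+ - y^*) - (y\+ - y^k)\T P(y\+ - y^k)$: the first summand pairs with the primal's $-\alpha^2\norm{B\T(y^k - y^*)}^2$ to form exactly $-\alpha^2(y^k - y^*)\T\pa{BB\T + \frac{1}{\alpha}P}(y^k - y^*)$, the second is nonnegative and discarded, and the third is absorbed via $(y\+ - y^k)\T P(y\+ - y^k) \leq \ove{P}\norm{y\+ - y^k}^2$.

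Finally I would add the primal inequality (weight $1$) to the dual inequality (weight $c_y = \alpha/\beta$). The two coupling terms collapse to $2\alpha\dotprod{B(x\+ - x^*), y\+ - y^k}$, and together with the surviving $\norm{y\+ - y^k}^2$ contributions this forms $2\alpha\dotprod{B(x\+ - x^*), y\+ - y^k} - \alpha\pa{\frac{1}{\beta} - \ove{P}}\norm{y\+ - y^k}^2$; completing the square bounds it by $\frac{\alpha\beta\os^2(B)}{1 - \beta\ove{P}}\norm{x\+ - x^*}^2 = (1 - c_x)\norm{x\+ - x^*}^2$, which is moved to the left to create the coefficient $c_x$. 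The merged quadratic supplies the dual reduction $\alpha^2\mine{BB\T + \frac{1}{\alpha}P}\norm{y^k - y^*}^2 = c_y\,\alpha\beta\mine{BB\T + \frac{1}{\alpha}P}\norm{y^k - y^*}^2$, matching the $D$-term of $\delta$. I expect the main obstacle to be the final bookkeeping that matches the primal side to the advertised rate: the raw primal contraction is $\alpha(2 - \alpha L_x)\mu_x$, but once the factor $c_x < 1$ appears on the left one must invoke the step-size bound $\beta \leq \frac{\mu_x}{\os^2(B) + \mu_x\ove{P}}$, which forces $\frac{\alpha\beta\os^2(B)}{1 - \beta\ove{P}} \leq \alpha\mu_x$ (hence $c_x \geq 1 - \alpha\mu_x$), to verify $1 - \alpha(2 - \alpha L_x)\mu_x \leq (1 - \min\{A, D\})c_x$ with $A = \alpha\mu_x(1 - \alpha L_x)$; this is the step that pins down the reported rate. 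Positivity of $A$ and of $D$ — the latter relying on $BB\T + cP > 0$ from \cref{main_pro_assumption} with $c = 1/\alpha$ — together with $\alpha < 1/L_x$ then ensures $\delta \in (0, 1)$, completing the induction.
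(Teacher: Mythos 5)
Your proposal is correct and is essentially the paper's own proof: the same one-step contraction of the weighted error $c_x\norm{x^k-x^*}^2+\frac{\alpha}{\beta}\norm{y^k-y^*}^2$, with the same cancellation of the bilinear cross terms, the same absorption of $\frac{\alpha\beta\os^2(B)}{1-\beta\ove{P}}\norm{x^{k+1}-x^*}^2$ into the weight $c_x$, the same merging of $-\alpha^2\norm{B\T(y^k-y^*)}^2-\alpha\norm{y^k-y^*}^2_P$ into the rate $\alpha\beta\mine{BB\T+\frac{1}{\alpha}P}$, and the same final bookkeeping $c_x\geq 1-\alpha\mu_x$ (the paper's step \cref{1703}, enabled by $\frac{\beta\os^2(B)}{1-\beta\ove{P}}\leq\mu_x$) that converts the raw primal decrease $\alpha\mu_x(2-\alpha L_x)$ into $\delta_x c_x$ with $\delta_x=1-\alpha\mu_x(1-\alpha L_x)$. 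Your only deviations are cosmetic and both sound: you derive the dual estimate from prox-optimality, monotonicity of $\partial g_2$, the three-point identity, and Young's inequality, which yields exactly the inequality the paper obtains by expanding the pre-prox error $\tz^{k+1}$, applying the Jensen split \cref{1441}, and invoking nonexpansiveness of $\prox_{\beta g_2}$; and you argue uniqueness of $(x^*,y^*)$ via strong convexity of the dual function (\cref{2131}) rather than the paper's direct first-order-condition argument in \cref{2103}.
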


\begin{proof}
  See \cref{appendix_PDPG_convergence}.
\end{proof}

\begin{remark}
  \cref{PDPG_convergence} establishes that the linear convergence rate of PDPG is $\delta$. We now discuss how to derive the iteration complexity of PDPG\footnote{For PDPG, the oracle complexities of $\mathcal{A}$ and $\mathcal{B}$ coincide with its iteration complexity.} to achieve a desired accuracy $\epsilon$ based on this convergence rate. It is straightforward to show that, to ensure $c_x\norm{x^K-x^*}^2 + c_y\norm{y^K-y^*}^2 \leq \epsilon$, the number of iterations must satisfy $K \geq \frac{1}{1-\delta}\log\pa{\frac{c_x\norm{x^0-x^*}^2 + c_y\norm{y^0-y^*}^2}{\epsilon}}$. Note that $\delta$ is function of $\alpha$ and $\beta$. To minimize $\frac{1}{1-\delta}$, we need to choose appropriate values for $\alpha$ and $\beta$ , which is equivalent to maximizing $\min\lt\{\alpha\mu_x(1-\alpha L_x), \alpha\beta\mine{BB\T + \frac{1}{\alpha}P}\rt\}$. Appantly we should choose $\beta = \frac{\mu_x}{\os^2(B)+\mu_x\ove{P}}$. However, selecting $\alpha$ is less straightforward, as its influence on $\alpha\beta\mine{BB\T + \frac{1}{\alpha}P}$ is not clear. Nevertheless, if we consider only $\alpha\mu_x(1-\alpha L_x)$, the optimal choice of $\alpha$ would be $\alpha = \frac{1}{2L_x}$. Under this reasoning, it is reasonable to set $\alpha = \frac{1}{2L_x}$ and $\beta = \frac{\mu_x}{\os^2(B)+\mu_x\ove{P}}$. This yields $\frac{1}{1-\delta} = 2\frac{L_x}{\mu_x}\frac{\os^2(B)+\mu_x\ove{P}}{\mine{BB\T + 2L_x P}}$. Recall that $P \geq 0$. By applying Weyl's inequality \cite{horn2012matrix}, we obtain $\mine{BB\T + 2L_x P} \geq \mine{BB\T + L_x P} + \mine{P} \geq \mine{BB\T + L_x P}$. Additionally, noting that $\bO{C_1 + C_2} = \bO{\max(C_1, C_2)}$, we can finally derive the oracle complexity of PDPG as shown in \cref{1229}.
\end{remark}

\section{Linear Convergence of iDAPG for the Dual-Strongly-Convex Case}
\cref{PDPG_convergence} provides an optimistic answer for the previous question. To fully address the question, we first offer some intuition behind the conditions under which existing algorithms achieve linear convergence.
Since the saddle point of $\cL$ exists, according to \cite[Lemma 36.2]{rockafellar1970convex}, we have
\eqa{
  &\min_{x \in \R^{d_x}}\max_{y \in \R^{d_y}} f_1(x) + f_2(x) + y\T Bx - g_1(y) -g_2(y) \\
  =& -\min_{y \in \R^{d_y}} \varPhi(y) = \varphi(y) + g_2(y), \label{2158}
}
where $\varphi(y) = g_1(y) + (f_1+f_2)^*(-B\T y)$. Then, we obtain the following lemma.
\begin{lemma} \label{2131}
  Assume that \cref{basic_assump} holds, then $\varphi$ is $\pa{L_y+\frac{\os^2(B)}{\mu_x}}$-smooth. Furthermore, it holds that
  \begin{enumerate}
    \item if $g_1$ is $\mu_y$-strongly convex, then $\varphi$ is $\mu_y$-strongly convex;
    \item if $f_2 = 0$ and $B$ has full row rank, then $\varphi$ is $\frac{\mins^2(B)}{L_x}$-strongly convex;
    \item if $f_2 = 0$ and $g_2 = 0$, and $g_1$ is a linear function, then $\varphi$ is $\frac{\us^2(B)}{L_x}$-strongly convex on $\Range{B}$;
    \item if \cref{main_pro_assumption} holds, then $\varphi$ is $\frac{\mine{BB\T + L_xP}}{L_x}$-strongly convex.
  \end{enumerate}
\end{lemma}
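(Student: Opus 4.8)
The plan is to work entirely with the dual function $\varphi(y) = g_1(y) + (f_1+f_2)^*(-B\T y)$ and to exploit the Fenchel duality between smoothness and strong convexity. Write $h = f_1 + f_2$ and $\psi(y) = h^*(-B\T y)$, so that $\varphi = g_1 + \psi$. Since $f_1$ is $\mu_x$-strongly convex and $f_2$ is convex, $h$ is $\mu_x$-strongly convex, hence $h^*$ is finite and differentiable with a $\frac{1}{\mu_x}$-Lipschitz gradient. The chain rule gives $\nabla\psi(y) = -B\nabla h^*(-B\T y)$, so that $\norm{\nabla\psi(y_1)-\nabla\psi(y_2)} \leq \os(B)\cdot\frac{1}{\mu_x}\norm{B\T(y_1-y_2)} \leq \frac{\os^2(B)}{\mu_x}\norm{y_1-y_2}$; thus $\psi$ is $\frac{\os^2(B)}{\mu_x}$-smooth, and adding the $L_y$-smooth $g_1$ yields the claimed $\pa{L_y + \frac{\os^2(B)}{\mu_x}}$-smoothness of $\varphi$.

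For the strong-convexity claims the unifying device is this: when $f_2 = 0$, so that $h = f_1$ is additionally $L_x$-smooth, $f_1^*$ is $\frac{1}{L_x}$-strongly convex, and substituting $u_i = -B\T y_i$ into the strong-convexity inequality for $f_1^*$, together with the chain-rule identity $\nabla\psi(y) = -B\nabla f_1^*(-B\T y)$, transfers it to
\[
  \psi(y_1) \geq \psi(y_2) + \dotprod{\nabla\psi(y_2), y_1-y_2} + \tfrac{1}{2L_x}\norm{B\T(y_1-y_2)}^2 .
\]
Everything then reduces to lower-bounding the quadratic form $v \mapsto \frac{1}{L_x}\norm{B\T v}^2$ (possibly augmented by curvature from $g_1$). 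Case~1 is immediate and does not even require $f_2 = 0$: $\psi$ is convex and $g_1$ is $\mu_y$-strongly convex, so $\varphi$ inherits $\mu_y$-strong convexity. Case~2 uses that full row rank gives $\mins(B) > 0$ and $\norm{B\T v} \geq \mins(B)\norm{v}$ for every $v$, producing the modulus $\frac{\mins^2(B)}{L_x}$.

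The main obstacle is Case~3, where $B$ need not have full row rank, so $\mins(B)$ may vanish and global strong convexity fails. Here I would restrict to $v \in \Range{B} = \Range{BB\T}$, on which $\norm{B\T v}^2 = v\T BB\T v \geq \us^2(B)\norm{v}^2$ because $\us^2(B)$ is the smallest nonzero eigenvalue of $BB\T$; since $g_1$ is linear, hence convex with no curvature, this yields strong convexity of $\varphi$ with modulus $\frac{\us^2(B)}{L_x}$ only on $\Range{B}$, exactly as stated. Case~4 then combines two independent sources of curvature: the quadratic part $\frac{1}{2}y\T Py$ of $g_1$ contributes $v\T Pv$ to the second-order lower bound (the convex $g_3$ and the affine $y\T b$ term contribute nonnegatively and nothing, respectively), while $\psi$ contributes $\frac{1}{L_x}\norm{B\T v}^2$, so $\varphi$ obeys a second-order lower bound with quadratic form $\frac{1}{L_x}v\T\pa{BB\T + L_xP}v \geq \frac{\mine{BB\T + L_xP}}{L_x}\norm{v}^2$; positivity of this modulus is precisely what the assumption $BB\T + cP > 0$ guarantees upon taking $c = L_x$. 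The delicate points are thus the rank-deficient bookkeeping in Case~3 and the verification that the $g_3$ and cross terms in Case~4 only help.
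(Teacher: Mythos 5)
Your proposal is correct and takes essentially the same approach as the paper: both pass through the Fenchel conjugate $(f_1+f_2)^*(-B\T y)$, use the strong-convexity/smoothness duality (with $f_2=0$ making the conjugate $\frac{1}{L_x}$-strongly convex) to reduce each case to lower-bounding the quadratic form $v \mapsto \frac{1}{L_x}\norm{B\T v}^2$ augmented by the curvature of $g_1$, including the same restriction to $\Range{B}$ in Case~3 and the same combination $\frac{1}{L_x}v\T\pa{BB\T + L_xP}v$ in Case~4. The only cosmetic difference is that you express strong convexity via function-value inequalities where the paper uses gradient monotonicity, which are equivalent formulations.
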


\begin{proof}
  See \cref{appendix_2131}.
\end{proof}

According to \cref{2131}, if \cref{basic_assump} holds and any of the conditions listed above is satisfied, then $\varphi$ or $\varPhi$ is smooth and strongly convex (or restrictedly strongly convex).

For the unconstrained optimization problem \cref{2158}, it is well known that the classical proximal gradient descent method achieves linear convergence when $\varphi$ is smooth and strongly convex. Moreover, a faster convergence rate can be obtained by using APG \cite[Lecture 7]{vandenberghe2022om}. However, it is essential to consider the computational cost of calculating $\nabla \varphi(y)$:
\eqe{
  \nabla \varphi(y) = \nabla g_1(y) - Bx^*(y),
}
where
\eqe{ \label{2346}
  x^*(y) = \arg\min_{x \in \R^{d_x}} f_1(x) + f_2(x) + y\T Bx.
}
Since $f_1$ is smooth and strongly convex, \cref{2346} can also be solved by APG with linear convergence. However, utilizing the exact $x^*(y)$ requires solving \cref{2346} precisely, which is often impractical or even impossible for a general $f_1$. This challenge can be addressed by employing the inexact APG \cite{schmidt2011convergence} to solve \cref{2158}. The inexact APG relies only on an approximate gradient of $\varphi$ (implying that an approximate solution to \cref{2346} suffices) and can achieve the same convergence rates with the exact APG, provided the inexactness of the gradient is well-controlled.

Building on this approach, i.e., using the inexact APG to solve \cref{2158}, we propose iDAPG (\cref{alg:iDAPG}), which converges linearly when \cref{basic_assump} holds and $\varphi$ is strongly convex. According to \cref{2131}, the condition that $\varphi$ is strongly convex is weaker than all the conditions on which existing algorithms and PDPG achieve linear convergence.

Assume that $x\+$ satisfies the following error condition:
\eqe{ \label{2229}
\norm{x\+-x^*(z^k)}^2 \leq \frac{{\varepsilon^2_{k+1}}}{\os^2(B)}.
}
In the following theorem, we demonstrate that iDAPG achieves linear convergence if $\varepsilon_{k+1}^2$ decreases linearly.

\begin{theorem} \label{iDAPG_convergence}
  Assume that \cref{basic_assump} holds and $\varphi$ is $\mu_{\varphi}$-strongly convex, $x^k$ meets the condition \cref{2229}, and
  \eqe{ \label{1535}
    \varepsilon_{k+1}^2 = \theta \varepsilon_k^2,
  }
  where $\theta \in \pa{0, 1}$, then $x\+$ and $y^k$ generated by iDAPG satisfy that $\norm{x\+-x^*}^2$ and $\norm{y^k-y^*}^2$ converge as
  \begin{enumerate}
    \item $\bO{\max\pa{1-\frac{1}{\sqrt{\kappa_{\varphi}}}, \theta}^k}$ if $\theta \neq 1-\frac{1}{\sqrt{\kappa_{\varphi}}}$;
    \item $\bO{k^2\pa{1-\frac{1}{\sqrt{\kappa_{\varphi}}}}^k}$ if $\theta = 1-\frac{1}{\sqrt{\kappa_{\varphi}}}$;
  \end{enumerate}
  where $\kappa_{\varphi} = \frac{\os^2(B) + \mu_xL_y}{\mu_x\mu_{\varphi}}$.
\end{theorem}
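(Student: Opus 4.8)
The plan is to recognize \cref{alg:iDAPG} as an instance of the inexact accelerated proximal gradient (APG) method of \cite{schmidt2011convergence} applied to the dual minimization $\min_{y}\varphi(y)+g_2(y)$ from \cref{2158}, and to import the corresponding convergence bound. First I would check that the vector $\nabla g_1(z^k)-Bx\+$ appearing in the update of $y\+$ is an inexact evaluation of the true gradient $\nabla\varphi(z^k)=\nabla g_1(z^k)-Bx^*(z^k)$, with error $e_k:=B\pa{x^*(z^k)-x\+}$ obeying $\norm{e_k}\le\os(B)\norm{x\+-x^*(z^k)}\le\varepsilon_{k+1}$ by the accuracy requirement \cref{2229}. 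Because $g_2$ is proximal-friendly, its proximal step is exact, so the gradient error $e_k$ is the only inexactness. By \cref{2131}, $\varphi$ is $L_{\varphi}$-smooth with $L_{\varphi}=L_y+\os^2(B)/\mu_x$ and, by assumption, $\mu_{\varphi}$-strongly convex, so $\kappa_{\varphi}=L_{\varphi}/\mu_{\varphi}$ matches the stated expression, and $\beta_k=(\sqrt{\kappa_{\varphi}}-1)/(\sqrt{\kappa_{\varphi}}+1)$ is exactly the momentum weight for this regime.

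Next I would invoke the strongly convex inexact APG estimate, which bounds the dual gap $\Delta_k:=(\varphi+g_2)(y^k)-(\varphi+g_2)(y^*)$ by an expression of the form $\Delta_k\le\rho^{k}\pa{\sqrt{\Delta_0}+C\sum_{i=1}^{k}\rho^{-i/2}\varepsilon_i}^{2}$, where $\rho=1-1/\sqrt{\kappa_{\varphi}}$ is the exact APG contraction factor and $C$ collects the $\mu_{\varphi}$-dependent constants. Substituting the prescribed decay $\varepsilon_i=\theta^{i/2}\varepsilon_0$ implied by \cref{1535} turns the accumulated error into the geometric sum $\sum_{i=1}^{k}\pa{\theta/\rho}^{i/2}$ with ratio $\sqrt{\theta/\rho}$. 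The three cases then emerge directly: for $\theta<\rho$ the sum stays bounded and $\Delta_k=\bO{\rho^k}$; for $\theta>\rho$ the sum grows like $\pa{\theta/\rho}^{k/2}$ and the factor $\rho^k$ cancels to give $\Delta_k=\bO{\theta^k}$, so the two regimes combine as $\bO{\max(\rho,\theta)^k}$; and for the borderline $\theta=\rho$ the sum equals $k$, producing $\Delta_k=\bO{k^2\rho^k}$.

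It remains to pass from the dual gap to the asserted distance bounds. Strong convexity of $\varphi+g_2$ yields $\norm{y^k-y^*}^2\le\frac{2}{\mu_{\varphi}}\Delta_k$ at once. For the primal iterate I would use $x^*=x^*(y^*)$ and split $\norm{x\+-x^*}\le\norm{x\+-x^*(z^k)}+\norm{x^*(z^k)-x^*(y^*)}$; the first term is $\bO{\theta^{k/2}}$ by \cref{2229}. For the second, since $f_1+f_2$ is $\mu_x$-strongly convex, $x^*(\cdot)=\nabla\pa{f_1+f_2}^*(-B\T\cdot)$ is $\frac{1}{\mu_x}$-Lipschitz in its argument, whence $\norm{x^*(z^k)-x^*(y^*)}\le\frac{\os(B)}{\mu_x}\norm{z^k-y^*}$; expanding $z^k=y^k+\beta_k\pa{y^k-y^{k-1}}$ bounds $\norm{z^k-y^*}$ by $\norm{y^k-y^*}$ and $\norm{y^{k-1}-y^*}$, both of order $\sqrt{\Delta_k}$ for each of the three rates. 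Combining gives $\norm{x\+-x^*}^2=\bO{\Delta_k}$, so $x\+$ inherits the same rate as $y^k$.

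The main obstacle I anticipate is not a single hard computation but applying the inexact APG bound with the precise normalization so that $\varepsilon_i$ enters with weight exactly $\rho^{-i/2}$; any other weight would shift the threshold away from $\theta=\rho$ and corrupt the case split. The borderline case, where the geometric ratio is one and forces the extra $k^2$ factor, is the most delicate point and has to be treated on its own rather than recovered as a limit of the generic estimate.
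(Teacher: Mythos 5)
Your proposal is correct and takes essentially the same route as the paper's own proof: the paper likewise casts iDAPG as the inexact APG of \cite{schmidt2011convergence} applied to $\varPhi=\varphi+g_2$ with gradient error $\norm{B(x^{k+1}-x^*(z^k))}\le\varepsilon_{k+1}$ (giving the weighted-error bound of \cref{2236}), evaluates the geometric sum in exactly your three cases including the separate borderline treatment, and recovers the primal rate via the same decomposition through $x^*(z^k)$, the $\frac{\os(B)}{\mu_x}$-Lipschitzness of $x^*(\cdot)$, and the momentum expansion of $z^k$ (\cref{1432}). No gaps.
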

\begin{proof}
  See \cref{appendix_iDAPG_convergence}.
\end{proof}

\begin{remark}
  Since $x^*(z^k)$ is not available before solving \cref{1356}, \cref{2229} cannot be directly applied in practice. By the definition of $x^*(z^k)$, we have
  \eqe{
    \0 \in \partial_x \cL(x^*(z^k), z^k) = \nabla f_1\pa{x^*(z^k)}+B\T z^k + \partial f_2\pa{x^*(z^k)}.
  }
  Using the $\mu_x$-strong convexity of $\cL$ w.r.t. $x$, we can obtain
  \eqe{ \label{1015}
  \norm{x\+-x^*(z^k)} &\leq \min_{S_{f_2}(x\+) \in \partial f_2(x\+)} \frac{1}{\mu_x}\norm{\nabla f_1(x\+)+S_{f_2}(x\+)+B\T z^k} \\
  &= \frac{1}{\mu_x}\dist{\0, \partial_x \cL(x\+, z^k)}.
  }
  Hence, we can instead use
  \eqe{
    \dist{\0, \partial_x \cL(x\+, z^k)} \leq \frac{{\mu_x\varepsilon_{k+1}}}{\os(B)}
  }
  to guarantee \cref{2229}, where $\dist{\0, \partial_x \cL(x\+, z^k)}$ can be easily calculated under the assumption that $f_2$ is proximal-friendly.
\end{remark}

Let us now analyze the outer and inner iteration complexities of iDAPG.
\begin{theorem} \label{iDAPG_complexity}
  Under the same assumptions and conditions with \cref{iDAPG_convergence}, choose a constant $c > 1$ and set
  \eqe{ \label{epsilon_1}
    \theta &= 1-\frac{1}{c\sqrt{\kappa_{\varphi}}}, \\
    \varepsilon_1 &= \pa{\sqrt{\theta}-\sqrt{1-\frac{1}{\sqrt{\kappa_{\varphi}}}}}\sqrt{\mu_{\varphi}\pa{\varPhi(y^0) - \varPhi(y^*)}}.
  }
  Then, the outer iteration complexity of iDAPG (to guarantee $\norm{x\+-x^*}^2 \leq \epsilon$ and $\norm{y^k-x^*}^2 \leq \frac{\mu^2_x}{\os^2(B)}\epsilon$) is given by
  \eqe{ \label{1456}
    \mO\lt(\sqrt{\kappa_{\varphi}} \log \lt(\frac{\mathcal{C}\pa{\varPhi(y^0) - \varPhi(y^*)}}{\epsilon}\rt)\rt),
  }
  where $\kappa_{\varphi}$ is defined in \cref{iDAPG_convergence} and $\mathcal{C} > 0$ is a constant.
  Moreover, if APG is employed to solve the subproblem \cref{1356}, with $x^k$ as the initial solution of APG at the $k$-th iteration, the inner iteration complexity of iDAPG is given by
  \eqe{ \label{1457}
    \tilde{\mO}\pa{\sqrt{\kappa_{\varphi}\kappa_x} \log \lt(\frac{\mathcal{C}\pa{\varPhi(y^0) - \varPhi(y^*)}}{\epsilon}\rt)},
  }
  where $\tilde{\mO}$ hides a logarithmic factor dependent on $\mu_x$, $\mu_{\varphi}$, $L_x$, $L_y$, $\os^2(B)$, and $c$.
\end{theorem}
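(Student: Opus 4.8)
The plan is to handle the outer and inner iteration counts separately, in both cases reducing everything to the linear rate already established in \cref{iDAPG_convergence} and then doing the bookkeeping needed to convert rates into counts.

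For the outer complexity I would first note that the choice $\theta = 1 - \frac{1}{c\sqrt{\kappa_{\varphi}}}$ with $c > 1$ forces $\theta > 1 - \frac{1}{\sqrt{\kappa_{\varphi}}}$, so we land in case~1 of \cref{iDAPG_convergence} and both $\norm{x^{k+1} - x^*}^2$ and $\norm{y^k - y^*}^2$ decay as $\mO(\theta^k)$. The purpose of the calibrated $\varepsilon_1$ in \cref{epsilon_1} is precisely to make the multiplicative constants in these $\mO(\cdot)$ bounds proportional to $\varPhi(y^0) - \varPhi(y^*)$; I would read these constants off directly from the bound proved for \cref{iDAPG_convergence}. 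Writing the two accuracy requirements as $C_x\theta^k \leq \epsilon$ and $C_y\theta^k \leq \frac{\mu_x^2}{\os^2(B)}\epsilon$ with $C_x, C_y \propto \varPhi(y^0) - \varPhi(y^*)$, both are met once $k \geq \frac{1}{-\log\theta}\log\frac{\mathcal{C}\pa{\varPhi(y^0)-\varPhi(y^*)}}{\epsilon}$. Using $-\log\theta = -\log\pa{1 - \frac{1}{c\sqrt{\kappa_{\varphi}}}} \geq \frac{1}{c\sqrt{\kappa_{\varphi}}}$ gives $\frac{1}{-\log\theta} \leq c\sqrt{\kappa_{\varphi}}$, and since $c$ is a fixed constant this yields the claimed outer complexity \cref{1456}.

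For the inner complexity I would count the APG iterations spent on the subproblem \cref{1356} at each outer step. Writing $F_k(x) = f_1(x) + f_2(x) + \dotprod{B\T z^k, x}$, the smooth part of $F_k$ is $\mu_x$-strongly convex and $L_x$-smooth, so APG warm-started at $x^k$ reduces the gap from $F_k(x^k) - F_k(x^*(z^k))$ down to a fraction $\rho_k$ of itself in $\mO\pa{\sqrt{\kappa_x}\log(1/\rho_k)}$ iterations. The termination rule in the remark following \cref{iDAPG_convergence}, namely $\dist{\0, \partial_x\cL(x^{k+1}, z^k)} \leq \frac{\mu_x\varepsilon_{k+1}}{\os(B)}$, combined with $\mu_x$-strong convexity of $F_k$, yields both $\norm{x^{k+1} - x^*(z^k)} \leq \frac{\varepsilon_{k+1}}{\os(B)}$ (i.e. \cref{2229}) and the target function-value accuracy $F_k(x^{k+1}) - F_k(x^*(z^k)) \leq \frac{\mu_x\varepsilon_{k+1}^2}{2\os^2(B)}$ per outer step. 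The heart of the argument is then to show that the warm-start gap $F_k(x^k) - F_k(x^*(z^k))$ itself decays like $\theta^k$. I would decompose it across the change of subproblem: since $F_k - F_{k-1} = \dotprod{B\T(z^k - z^{k-1}), \cdot}$, the gap equals the previous step's achieved accuracy $F_{k-1}(x^k) - F_{k-1}(x^*(z^{k-1})) \leq \frac{\mu_x\varepsilon_k^2}{2\os^2(B)}$, plus a nonpositive term (as $x^*(z^{k-1})$ minimizes $F_{k-1}$), plus a cross term $\dotprod{B\T(z^k - z^{k-1}), x^k - x^*(z^k)}$. Bounding the cross term through $\norm{x^k - x^*(z^k)} \leq \frac{\varepsilon_k}{\os(B)} + \frac{\os(B)}{\mu_x}\norm{z^k - z^{k-1}}$ (triangle inequality plus the $\frac{\os(B)}{\mu_x}$-Lipschitzness of $x^*(\cdot)$, which comes from the $\mu_x$-strong convexity used in \cref{2131}) leaves only $\varepsilon_k$ and $\norm{z^k - z^{k-1}}$, both of which are $\mO(\theta^{k/2})$: the former by \cref{1535}, the latter from $z^k = y^k + \beta_k(y^k - y^{k-1})$ together with the $\mO(\theta^{k/2})$ decay of $\norm{y^k - y^*}$. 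Hence $F_k(x^k) - F_k(x^*(z^k)) = \mO(\theta^k) \leq D\theta^k$ for some constant $D \propto \varPhi(y^0) - \varPhi(y^*)$.

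The crucial cancellation is that the required reduction factor is
\eqe{
  \rho_k = \frac{\mu_x\varepsilon_{k+1}^2/\pa{2\os^2(B)}}{F_k(x^k) - F_k(x^*(z^k))} \geq \frac{\mu_x\varepsilon_1^2}{2\os^2(B)D},
}
using $\varepsilon_{k+1}^2 = \theta^k\varepsilon_1^2$ from \cref{1535}; the right-hand side is a positive constant \emph{independent of $k$}, because the warm-start gap and the target accuracy shrink at the same rate $\theta^k$. Consequently each outer step costs only $\tilde{\mO}(\sqrt{\kappa_x})$ inner iterations, where the hidden logarithmic factor is $\log(1/\rho_k)$, depending on $\mu_x, \mu_{\varphi}, L_x, L_y, \os^2(B)$, and $c$; multiplying by the outer count \cref{1456} gives \cref{1457}. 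The main obstacle is establishing this geometric decay of the warm-start gap, and in particular controlling $\norm{z^k - z^{k-1}}$ at the rate $\theta^{k/2}$ — this is where the acceleration-induced extrapolation must be reconciled with the gradient inexactness, and where the constants must be tracked tightly enough to confirm that $\rho_k$ is genuinely $k$-independent rather than merely bounded. A secondary point is the initial step $k=0$, whose warm start $x^0$ need not be near $x^*(z^0)$; this contributes only an additive $\tilde{\mO}(\sqrt{\kappa_x})$ term that is absorbed into the final bound.
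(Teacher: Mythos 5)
Your proposal is correct, and its outer-complexity half is essentially the paper's own argument: the paper likewise notes $\theta > 1-\frac{1}{\sqrt{\kappa_{\varphi}}}$, plugs the calibrated $\varepsilon_1$ into \cref{2236} and the geometric-series bound on $\mathcal{E}_k$ to make the prefactor proportional to $\varPhi(y^0)-\varPhi(y^*)$, and inverts $\theta^k \leq \epsilon/\mathcal{C}$. The inner half, however, takes a genuinely different route. The paper never touches the warm-start function gap $F_k(x^k)-F_k(x^*(z^k))$: it runs APG with $\theta_0=1$, whose iteration count to reach $F_k(x^{k+1})-F_k(x^*(z^k)) \leq \frac{\mu_x}{2}\varepsilon_{k+1}^2$ is $\sqrt{\kappa_x}\log\pa{\kappa_x\norm{x^k-x^*(z^k)}^2/\varepsilon_{k+1}^2}+1$, i.e., driven by the warm-start \emph{distance}, and it bounds that distance by the triangle inequality through the saddle point: $\norm{x^k-x^*(z^k)}^2 \leq 2\norm{x^k-x^*}^2 + 2\norm{x^*(z^k)-x^*}^2$, both terms being $\mO\pa{\varepsilon_1^2\theta^{k-1}}$ directly from the outer analysis (\cref{1432} and the Lipschitz bound \cref{1501}). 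Since $\varepsilon_{k+1}^2 = \theta^k\varepsilon_1^2$, the ratio inside the logarithm is $k$-independent — the same cancellation you identified, but obtained without your $F_k - F_{k-1}$ decomposition, without the cross term, and without controlling $\norm{z^k-z^{k-1}}$. Your heavier route is nonetheless sound: the decomposition is exact, the middle bracket is nonpositive because $x^*(z^{k-1})$ minimizes $F_{k-1}$, and $\norm{z^k-z^{k-1}} = \mO\pa{\theta^{k/2}}$ does follow from case~1 of \cref{iDAPG_convergence}. It even buys two things: a gap-reduction factor $\rho_k$ that connects naturally to the checkable stopping rule \cref{1015}, and the correct $\os^2(B)$ scaling — your target $F_k(x^{k+1})-F_k(x^*(z^k)) \leq \frac{\mu_x\varepsilon_{k+1}^2}{2\os^2(B)}$ yields \cref{2229} exactly, whereas the paper's target $\frac{\mu_x}{2}\varepsilon_{k+1}^2$ only gives $\norm{x^{k+1}-x^*(z^k)}^2 \leq \varepsilon_{k+1}^2$, off by the $1/\os^2(B)$ factor in \cref{2229} (harmless, since it is absorbed into the logarithmic constant, but your bookkeeping is tighter). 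One caution: your gap-to-gap count for APG on a composite subproblem needs a function-gap-based linear rate, which the APG of \cite[Lecture 7]{vandenberghe2022om} provides only after either taking $\theta_0 < 1$ or converting the initial gap to a distance via $\norm{x^k-x^*(z^k)}^2 \leq \frac{2}{\mu_x}\pa{F_k(x^k)-F_k(x^*(z^k))}$ at the cost of an extra $\kappa_x$ inside the log; either fix is routine and leaves \cref{1457} intact.
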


\begin{proof}
  See \cref{appendix_iDAPG_complexity}.
\end{proof}

\begin{remark}
  In \cref{iDAPG_complexity}, the choice of $\varepsilon_1$ is made solely to achieve a tighter logarithmic constant in \cref{1456}. However, this setting is impractical since $\varPhi(y^*)$ is unknown prior to solving the problem.   An alternative choice is to define $\varepsilon_1$ as an upper bound of $\pa{\sqrt{\theta}-\sqrt{1-\frac{1}{\sqrt{\kappa_{\varphi}}}}}\sqrt{\mu_{\varphi}\pa{\varPhi(y^0) - \varPhi(y^*)}}$. Given that $\varPhi$ is $\mu_{\varphi}$-strongly convex, we have
  \eqe{
  &\varPhi(y^0) - \varPhi(y^*) \\
  \leq& \min_{S_{g_2}(y^0) \in \partial g_2(y^0)} \frac{1}{2\mu_{\varphi}}\norm{\nabla \varphi(y^0)+S_{g_2}(y^0)}^2 \\
  \leq& \frac{1}{2\mu_{\varphi}}\pa{ \min_{S_{g_2}(y^0) \in \partial g_2(y^0)} \pa{\norm{\nabla g_1(y^0)+S_{g_2}(y^0)-B\tx(y^0)}^2} + \os^2(B)\norm{\tx(y^0)-x^*(y^0)}^2} \\
  \overset{\cref{1015}}\leq& \frac{1}{2\mu_{\varphi}} \pa{\distsq{\0, \partial_y \cL(\tx(y^0), y^0)} + \frac{\os^2(B)}{\mu_x^2}\distsq{\0, \partial_x \cL(\tx(y^0), y^0)}} = C,
  \nonumber
  }
  where $\tx(y^0)$ is any approximate solution of \cref{2346} with $y = y^0$. Given that $f_2$ and $g_2$ are both proximal-friendly, $C$ can be easily computed. Consequently, in practice, we can set $\varepsilon_1 = \pa{\sqrt{\theta}-\sqrt{1-\frac{1}{\sqrt{\kappa_{\varphi}}}}}\sqrt{\mu_{\varphi}C}$.
\end{remark}

\section{Discussions}
When employing first-order algorithms to solve \cref{main_pro}, the primary computational cost stems from evaluating $\nabla f_1$, $\prox_{f_2}$, $\nabla g_1$, and $\prox_{g_2}$, as well as performing matrix-vector multiplications involving $B$ and $B\T$. Let $\mathcal{A}$ represent the evaluation of $\nabla f_1$ and $\prox_{f_2}$, and let $\mathcal{B}$ denote the evaluation of $\nabla g_1$ and $\prox_{g_2}$, and matrix-vector multiplications involving $B$ and $B\T$. To select an appropriate algorithm for solving \cref{main_pro}, it is essential to consider the oracle complexities of $\mathcal{A}$ and $\mathcal{B}$ across different algorithms.

For those primal-dual algorithms such as LPD, ABPD-PGS, APDG and PDPG, the oracle complexities of $\mathcal{A}$ and $\mathcal{B}$ coincide with their iteration complexities. However, for iDAPG, the oracle complexity of $\mathcal{A}$ corresponds to its inner iteration complexity, while the oracle complexity of $\mathcal{B}$ aligns with its outer iteration complexity. Based on \cref{2131,iDAPG_complexity}, we can readily derive the oracle complexities of iDAPG across different cases provided in \cref{1229}. From \cref{1229}, the following observations can be made:
\begin{enumerate}
  \item For the strongly-convex-concave case, ABPD-PGS consistently emerges as the optimal choice, as it is the only algorithm whose oracle complexity matches the theoretical lower bound.
  \item For the strongly-convex-strongly-concave case, iDAPG achieves a lower oracle complexity of $\mathcal{B}$ but a higher oracle complexity of $\mathcal{A}$ compared to other algorithms. Additionally, since the solution of \cref{main_pro} exists, \cref{main_pro} is equivalent to
        \eqe{ \label{2324}
          \min_{y \in \R^{d_y}}\max_{x \in \R^{d_x}} g_1(y) + g_2(y) - x\T B\T y - f_1(x) - f_2(x).
        }
        Clearly, if \cref{main_pro} falls under the strongly-convex-strongly-concave case, so does \cref{2324}. Consequently, if iDAPG is applied to solve \cref{2324}, it achieves a lower oracle complexity of $\mathcal{A}$ but a higher oracle complexity of $\mathcal{B}$ compared to other algorithms.
        Therefore, iDAPG is a preferable choice when the computational cost of $\mathcal{A}$ is significantly higher or lower than that of $\mathcal{B}$; otherwise, LPD, ABPD-PGS, or APDG may be a more suitable choice.
  \item For the strongly-convex-full-rank case, iDAPG establishes a lower oracle complexity of $\mathcal{B}$ but a higher oracle complexity of $\mathcal{A}$ compared to APDG. Thus, iDAPG should be selected when $g_2 \neq 0$ or when the computational cost of $\mathcal{B}$ is significantly higher than that of $\mathcal{A}$; otherwise, APDG is the more suitable choice.
  \item For the strongly-convex-linear case, the algorithm introduced in \cite{salim2022optimal} consistently stands out as the optimal choice, as it is the only method whose oracle complexity achieves the theoretical lower bound.
  \item For the case that \cref{basic_assump,main_pro_assumption} hold, iDAPG establishes a lower oracle complexity of $\mathcal{B}$ compared to PDPG. However, one might observe that the oracle complexity of $\mathcal{A}$ for PDPG appears lower than that of iDAPG when $L_y \geq \frac{\os^2(B)}{\mu_x}$ and $\mu_xL_y < \mine{BB\T + L_xP}$. In reality, this scenario is impossible because $\mine{BB\T + L_xP} \leq \mine(L_xP) + \ove{BB\T} = \os^2(B)$ (by Weyl's inequality and $P \geq 0$), which implies that the oracle complexity of $\mathcal{A}$ for iDAPG is never higher than that of PDPG. Consequently, iDAPG should be preferred over PDPG.
\end{enumerate}
Based on the above analysis, the most suitable algorithm for solving \cref{main_pro} can be selected to minimize computational costs under specific conditions.  

\begin{remark}
  As mentioned earlier, the most significant advantage of iDAPG is its ability to achieve linear convergence under a weaker condition than existing methods. Additionally, as shown in \cref{1229}, iDAPG exhibits lower oracle complexity of $\mathcal{B}$ but higher complexity of $\mathcal{A}$ compared to SOTA algorithms in some cases. This is particularly beneficial when the evaluation of $\mathcal{B}$ is significantly more expensive than that of $\mathcal{A}$. In such cases, iDAPG can be a more suitable choice for solving \cref{main_pro}.
\end{remark}


\bibliographystyle{IEEEtran}
\bibliography{../public/bib}

\begin{thebibliography}{10}
\providecommand{\url}[1]{#1}
\csname url@samestyle\endcsname
\providecommand{\newblock}{\relax}
\providecommand{\bibinfo}[2]{#2}
\providecommand{\BIBentrySTDinterwordspacing}{\spaceskip=0pt\relax}
\providecommand{\BIBentryALTinterwordstretchfactor}{4}
\providecommand{\BIBentryALTinterwordspacing}{\spaceskip=\fontdimen2\font plus
\BIBentryALTinterwordstretchfactor\fontdimen3\font minus
  \fontdimen4\font\relax}
\providecommand{\BIBforeignlanguage}[2]{{%
\expandafter\ifx\csname l@#1\endcsname\relax
\typeout{** WARNING: IEEEtran.bst: No hyphenation pattern has been}%
\typeout{** loaded for the language `#1'. Using the pattern for}%
\typeout{** the default language instead.}%
\else
\language=\csname l@#1\endcsname
\fi
#2}}
\providecommand{\BIBdecl}{\relax}
\BIBdecl

\bibitem{thekumparampil2022lifted}
K.~K. Thekumparampil, N.~He, and S.~Oh, ``Lifted primal-dual method for
  bilinearly coupled smooth minimax optimization,'' in \emph{International
  Conference on Artificial Intelligence and Statistics}.\hskip 1em plus 0.5em
  minus 0.4em\relax PMLR, 2022, pp. 4281--4308.

\bibitem{luo2024accelerated}
H.~Luo, ``Accelerated primal-dual proximal gradient splitting methods for
  convex-concave saddle-point problems,'' \emph{arXiv preprint
  arXiv:2407.20195}, 2024.

\bibitem{kovalev2022accelerated}
D.~Kovalev, A.~Gasnikov, and P.~Richt{\'a}rik, ``Accelerated primal-dual
  gradient method for smooth and convex-concave saddle-point problems with
  bilinear coupling,'' \emph{Advances in Neural Information Processing
  Systems}, vol.~35, pp. 21\,725--21\,737, 2022.

\bibitem{zhang2022lower}
J.~Zhang, M.~Hong, and S.~Zhang, ``On lower iteration complexity bounds for the
  convex concave saddle point problems,'' \emph{Mathematical Programming}, vol.
  194, no.~1, pp. 901--935, 2022.

\bibitem{salim2022optimal}
A.~Salim, L.~Condat, D.~Kovalev, and P.~Richt{\'a}rik, ``An optimal algorithm
  for strongly convex minimization under affine constraints,'' in
  \emph{International Conference on Artificial Intelligence and
  Statistics}.\hskip 1em plus 0.5em minus 0.4em\relax PMLR, 2022, pp.
  4482--4498.

\bibitem{ben2009robust}
A.~Ben-Tal, A.~Nemirovski, and L.~El~Ghaoui, ``Robust optimization,'' 2009.

\bibitem{du2019linear}
S.~S. Du and W.~Hu, ``Linear convergence of the primal-dual gradient method for
  convex-concave saddle point problems without strong convexity,'' in \emph{The
  22nd International Conference on Artificial Intelligence and
  Statistics}.\hskip 1em plus 0.5em minus 0.4em\relax PMLR, 2019, pp. 196--205.

\bibitem{hardin2018generalized}
J.~W. Hardin and J.~M. Hilbe, \emph{Generalized Linear Models and
  Extensions}.\hskip 1em plus 0.5em minus 0.4em\relax Stata Press, 2018.

\bibitem{wang2017exploiting}
J.~Wang and L.~Xiao, ``Exploiting strong convexity from data with primal-dual
  first-order algorithms,'' in \emph{International Conference on Machine
  Learning}.\hskip 1em plus 0.5em minus 0.4em\relax PMLR, 2017, pp. 3694--3702.

\bibitem{lei2017doubly}
Q.~Lei, I.~E.-H. Yen, C.-y. Wu, I.~S. Dhillon, and P.~Ravikumar, ``Doubly
  greedy primal-dual coordinate descent for sparse empirical risk
  minimization,'' in \emph{International Conference on Machine Learning}.\hskip
  1em plus 0.5em minus 0.4em\relax PMLR, 2017, pp. 2034--2042.

\bibitem{chambolle2011first}
A.~Chambolle and T.~Pock, ``A first-order primal-dual algorithm for convex
  problems with applications to imaging,'' \emph{Journal of Mathematical
  Imaging and Vision}, vol.~40, pp. 120--145, 2011.

\bibitem{chambolle2016ergodic}
A.~Chambolle and T.~Pock, ``On the ergodic convergence rates of a first-order
  primal-dual algorithm,'' \emph{Mathematical Programming}, vol. 159, no.~1,
  pp. 253--287, 2016.

\bibitem{duchi2011dual}
J.~C. Duchi, A.~Agarwal, and M.~J. Wainwright, ``Dual averaging for distributed
  optimization: Convergence analysis and network scaling,'' \emph{IEEE
  Transactions on Automatic control}, vol.~57, no.~3, pp. 592--606, 2011.

\bibitem{zhang2022near}
G.~Zhang, Y.~Wang, L.~Lessard, and R.~B. Grosse, ``Near-optimal local
  convergence of alternating gradient descent-ascent for minimax
  optimization,'' in \emph{International Conference on Artificial Intelligence
  and Statistics}.\hskip 1em plus 0.5em minus 0.4em\relax PMLR, 2022, pp.
  7659--7679.

\bibitem{horn2012matrix}
R.~A. Horn and C.~R. Johnson, \emph{Matrix Analysis}.\hskip 1em plus 0.5em
  minus 0.4em\relax Cambridge University Press, 2012.

\bibitem{rockafellar1970convex}
R.~T. Rockafellar, \emph{Convex Analysis}.\hskip 1em plus 0.5em minus
  0.4em\relax Princeton University Press, 1970.

\bibitem{vandenberghe2022om}
\BIBentryALTinterwordspacing
L.~Vandenberghe, \emph{Optimization Methods for Large-Scale Systems}.\hskip 1em
  plus 0.5em minus 0.4em\relax Lecture Slides, UCLA, 2022. [Online]. Available:
  \url{https://www.seas.ucla.edu/~vandenbe/236C}
\BIBentrySTDinterwordspacing

\bibitem{schmidt2011convergence}
M.~Schmidt, N.~Roux, and F.~Bach, ``Convergence rates of inexact
  proximal-gradient methods for convex optimization,'' \emph{Advances in Neural
  Information Processing Systems}, vol.~24, 2011.

\bibitem{hiriart2004fundamentals}
J.-B. Hiriart-Urruty and C.~Lemar{\'e}chal, \emph{Fundamentals of Convex
  Analysis}.\hskip 1em plus 0.5em minus 0.4em\relax Springer Science \&
  Business Media, 2004.

\bibitem{zhou2018fenchel}
X.~Zhou, ``On the fenchel duality between strong convexity and lipschitz
  continuous gradient,'' \emph{arXiv preprint arXiv:1803.06573}, 2018.

\end{thebibliography}

\onecolumn
\appendices \label{appendix}
\crefalias{section}{appendix}

\section{Proof of \cref{PDPG_convergence}} \label{appendix_PDPG_convergence}
We first prove that \cref{main_pro} has a unique solution under \cref{basic_assump,main_pro_assumption}.
\begin{lemma} \label{2103}
  Assume that \cref{basic_assump,main_pro_assumption} holds, and $g_3 = 0$, then \cref{main_pro} has a unique solution $(x^*, y^*)$.
\end{lemma}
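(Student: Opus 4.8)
The plan is to pass to the dual formulation \cref{2158} and to exploit strong convexity there. Since \cref{main_pro_assumption} holds with $f_2 = 0$ (and here $g_3 = 0$), the reduced saddle function is $\cL(x,y) = f_1(x) + \dotprod{B\T y, x} - g_1(y) - g_2(y)$, and by \cite[Lemma 36.2]{rockafellar1970convex} (as recorded in \cref{2158}), $(x^*, y^*)$ is a saddle point of $\cL$ if and only if $y^*$ minimizes $\varPhi(y) = \varphi(y) + g_2(y)$, where $\varphi(y) = g_1(y) + f_1^*(-B\T y)$, and $x^*$ minimizes $\cL(\cdot, y^*)$. Uniqueness of the saddle point will therefore follow once I establish that $\varPhi$ has a unique minimizer and that the primal component is uniquely determined by $y^*$.

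First I would invoke \cref{2131}(4): under \cref{main_pro_assumption}, $\varphi$ is $\mu_{\varphi}$-strongly convex with $\mu_{\varphi} = \frac{\mine{BB\T + L_x P}}{L_x}$. The key point is that this modulus is strictly positive, since taking $c = L_x > 0$ in the hypothesis $BB\T + cP > 0$ of \cref{main_pro_assumption} yields $BB\T + L_x P > 0$, hence $\mine{BB\T + L_x P} > 0$. Because $g_2$ is proper convex lower semicontinuous, and $\varphi$ is finite-valued and smooth (as also asserted in \cref{2131}, using that $f_1^*$ is $\frac{1}{\mu_x}$-smooth on all of $\R^{d_x}$), the sum $\varPhi = \varphi + g_2$ is a proper, lower semicontinuous, $\mu_{\varphi}$-strongly convex function. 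A proper lower semicontinuous strongly convex function is coercive and attains its infimum at exactly one point, so $\varPhi$ has a unique minimizer $y^*$; this gives both existence and uniqueness of the dual optimum.

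Given this $y^*$, the primal component is recovered from $x^* \in \arg\min_{x \in \R^{d_x}} \cL(x, y^*) = \arg\min_{x \in \R^{d_x}} f_1(x) + \dotprod{B\T y^*, x}$, and since $f_1$ is $\mu_x$-strongly convex this minimizer is unique, characterized by $\nabla f_1(x^*) + B\T y^* = \0$. Combining the uniqueness of $y^*$ with the uniqueness of $x^*$ through the saddle-point correspondence yields the unique solution $(x^*, y^*)$.

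The main obstacle is not any single computation but the careful justification of the saddle-point/dual correspondence: I must confirm that the $y$-component of every saddle point is a minimizer of $\varPhi$ and conversely, so that uniqueness of the minimizer transfers to uniqueness of the saddle point. This requires verifying the regularity conditions under which \cite[Lemma 36.2]{rockafellar1970convex} applies in the present setting — in particular that $\varphi$ is indeed the conjugate-based dual objective and that strong duality holds, so that no spurious saddle points arise outside this correspondence. Once that correspondence is secured, the two strong-convexity arguments close the proof immediately.
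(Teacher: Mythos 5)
Your proof is correct, but it takes a genuinely different route from the paper's. The paper argues directly on the first-order optimality systems: it takes two putative solutions $(x^*, y^*)$ and $(x^*, y^o)$ (the $x$-part being shared, by strong convexity of $f_1$), subtracts their stationarity conditions to get $B\T(y^*-y^o) = \0$ and $P(y^*-y^o) = -\pa{S_{g_2}(y^*) - S_{g_2}(y^o)}$, and then uses monotonicity of $\partial g_2$ together with $P \geq 0$ to force $(y^*-y^o)\T P(y^*-y^o) = 0$, hence $(y^*-y^o)\T(BB\T + P)(y^*-y^o) = 0$, contradicting $BB\T + P > 0$ (\cref{main_pro_assumption} with $c = 1$). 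Your dual route instead imports \cref{2131}(4) and the saddle-point correspondence; this is legitimate, since the appendix proof of \cref{2131} does not rely on \cref{2103}, so no circularity arises. The trade-offs: the paper's argument is elementary and self-contained, needing only monotonicity and the positive-definiteness hypothesis; yours additionally delivers existence of the solution (which the paper only carries as a standing assumption stated after \cref{main_pro}), a quantitative dual strong-convexity modulus $\frac{\mine{BB\T + L_xP}}{L_x}$, and an argument that extends verbatim to $g_3 \neq 0$.

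One clarification on the ``main obstacle'' you flag: it is vacuous. \cite[Lemma 36.2]{rockafellar1970convex} is an elementary equivalence with no regularity hypotheses, and the direction you actually need for uniqueness holds unconditionally: if $(\bar x, \bar y)$ is any saddle point, then $\inf_x \cL(x, y) \leq \cL(\bar x, y) \leq \cL(\bar x, \bar y) = \inf_x \cL(x, \bar y)$ for all $y$, so $\bar y$ maximizes $-\varPhi$, and $\bar x$ minimizes $\cL(\cdot, \bar y)$ by the definition of a saddle point. Since $\varPhi$ has a unique minimizer and $\cL(\cdot, y^*)$ has a unique minimizer, uniqueness follows with no further qualification. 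The common saddle value needed for the converse direction (and for your existence claim) is supplied either by the paper's standing assumption that a solution exists, or constructively: if $y^*$ minimizes $\varPhi$, then $\0 \in \nabla g_1(y^*) - Bx^*(y^*) + \partial g_2(y^*)$ because $\nabla \varphi(y^*) = \nabla g_1(y^*) - Bx^*(y^*)$, which together with $\nabla f_1(x^*(y^*)) + B\T y^* = \0$ verifies both saddle inequalities for $(x^*(y^*), y^*)$ directly.
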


\begin{proof}
  Let $(x^*, y^*)$ be a solution of \cref{main_pro}, given the definition of $g_1$, we have
  \eqe{ \label{2051}
    \nabla f_1(x^*) + B\T y^* &= 0, \\
    Bx^* - Py^* - S_{g_2}(y^*) &= 0,
  }
  where $S_{g_2}(y^*) \in \partial g_2(y^*)$ is a subgradient of $g_2$ at $y^*$.
  Since $f$ is strongly convex, $x^*$ must be unique, but $y^*$ may not be. Assume that there is different solution $(x^*, y^o)$, which satisfies $y^o \neq y^*$. We then have
  \eqe{ \label{2052}
    \nabla f_1(x^*) + B\T y^o &= 0, \\
    Bx^* - Py^o - S_{g_2}(y^o) &= 0,
  }
  where $S_{g_2}(y^o) \in \partial g_2(y^o)$.
  Combining \cref{2051,2052} gives
  \eqe{ \label{1708}
    B\T(y^*-y^o) &= 0, \\
    P(y^*-y^o) &= -\pa{S_{g_2}(y^*) - S_{g_2}(y^o)}.
  }
  Using \cref{1708} and the fact that $P \geq 0$ and $g_2$ is convex, we have
  \eqe{
    (y^*-y^o)\T P(y^*-y^o) &\geq 0, \\
    (y^*-y^o)\T P(y^*-y^o) &= -\dotprod{S_{g_2}(y^*) - S_{g_2}(y^o), y^*-y^o} \leq 0,
  }
  hence $(y^*-y^o)\T P(y^*-y^o) = 0$. Using \cref{1708} again, we can obtain
  \eqe{
    (y^*-y^o)\T(P + BB\T)(y^*-y^o) = 0.
  }
  According to \cref{main_pro_assumption}, $P + BB\T > 0$, which implies that $y^*=y^o$. A contradiction arises, hence $y^*$ must be unique.
\end{proof}

\begin{appendix_proof}[\cref{PDPG_convergence}]
  Let $\tx^k = x^k - x^*$ and $\ty^k = y^k - x^*$, we can obtain the error system of PDPG:
  \eqe{ \label{2259}
    \tx\+ &= \tx^k - \alpha\lt(\lt(\nabla f_1(x^k)-\nabla f_1(x^*)\rt) + B\T \ty^k\rt), \\
    \tz\+ &= \ty^k - \beta\lt(P\ty^k - B\tx\+\rt), \\
    \ty\+ &= \prox_{\beta g_2}(z\+)-\prox_{\beta g_2}(z^*),
  }
  which holds due to the optimality condition of \cref{main_pro} and the particular form of $g_1$.

  According to \cref{2259}, we have
  \eqe{ \label{1439}
    \norm{\tx\+}^2 =& \norm{\tx^k - \alpha\lt(\nabla f_1(x^k)-\nabla f_1(x^*)\rt)}^2 + \alpha^2\norm{B\T \ty^k}^2 \\
    &- 2\alpha\dotprod{\tx^k - \alpha\lt(\nabla f_1(x^k)-\nabla f_1(x^*)\rt), B\T \ty^k}
  }
  and
  \eqe{ \label{1440}
    \norm{\tz\+}^2 =& \norm{\ty^k}^2 + \beta^2\norm{P\ty^k - B\tx\+}^2 - 2\beta\norm{\ty^k}^2_P + 2\beta\dotprod{\ty^k, B\tx\+} \\
    =& \norm{\ty^k}^2 + \beta^2\norm{P\ty^k - B\tx\+}^2 - 2\beta\norm{\ty^k}^2_P - 2\alpha\beta\norm{B\T\ty^k}^2 \\
    &+ 2\beta\dotprod{\ty^k, B\lt(\tx^k - \alpha\lt(\nabla f_1(x^k)-\nabla f_1(x^*)\rt)\rt)}.
  }
  According to \cref{1428}, we immediately have $\beta < \frac{1}{\ove{P}}$, which guarantees that $0 < \beta\ove{P} < 1$, then applying Jensen's inequality to $\norm{P\ty^k - B\tx\+}^2$ gives that
  \eqe{ \label{1441}
    \norm{P\ty^k - B\tx\+}^2 &\leq \frac{1}{\beta\ove{P}}\norm{P\ty^k}^2 + \frac{1}{1-\beta\ove{P}}\norm{B\tx\+}^2 \\
    &\leq \frac{1}{\beta}\norm{\ty^k}^2_P + \frac{1}{1-\beta\ove{P}}\norm{B\tx\+}^2.
  }
  Combining \cref{1439,1440,1441}, we can obtain
  \eqe{ \label{1702}
    &\lt(1-\frac{\alpha\beta\os^2(B)}{1-\beta\ove{P}}\rt)\norm{\tx\+}^2 + \frac{\alpha}{\beta}\norm{\tz\+}^2 \\
    \leq& \norm{\tx^k - \alpha\lt(\nabla f_1(x^k)-\nabla f_1(x^*)\rt)}^2 + \frac{\alpha}{\beta}\norm{\ty^k}^2 - \alpha\norm{\ty^k}^2_P- \alpha^2\norm{B\T \ty^k}^2 \\
    \leq& (1-\alpha\mu_x(2-\alpha L_x))\norm{\tx^k}^2 + \frac{\alpha}{\beta}\lt(\norm{\ty^k}^2 - \alpha\beta\norm{\ty^k}^2_{BB\T + \frac{1}{\alpha}P}\rt),
  }
  where the last inequality follows from
  \eqe{
    &\norm{\tx^k - \alpha\lt(\nabla f_1(x^k)-\nabla f_1(x^*)\rt)}^2 \\
    =& \norm{\tx^k}^2 + \alpha^2\norm{\nabla f_1(x^k)-\nabla f_1(x^*)}^2 - 2\alpha\dotprod{\tx^k, \nabla f_1(x^k)-\nabla f_1(x^*)} \\
    \leq& \norm{\tx^k}^2 - \alpha(2-\alpha L_x)\dotprod{\tx^k, \nabla f_1(x^k)-\nabla f_1(x^*)} \\
    \leq& (1-\alpha\mu_x(2-\alpha L_x))\norm{\tx^k}^2,
  }
  where we use the strong convexity and smoothness of $f$. According to \cref{1428}, we also have
  \eqe{ \label{1638}
    \mu_x &\geq \frac{\beta\os^2(B)}{1-\beta\ove{P}}, \\
    \alpha\beta &< \frac{\mu_x}{L_x}\frac{1-\beta\ove{P}}{\os^2(B)} \leq \frac{1-\beta\ove{P}}{\os^2(B)}.
  }
  Let $c_x = 1-\frac{\alpha\beta\os^2(B)}{1-\beta\ove{P}}$, $c_y = \frac{\alpha}{\beta}$ and $\delta_x = 1-\alpha\mu_x(1-\alpha L_x)$, we can easily verify that $c_x, c_y > 0$ and $\delta_x \in (0, 1)$ based on \cref{1428,1638}. Then we have
  \eqe{ \label{1703}
    &(1-\alpha\mu_x(2-\alpha L_x))\norm{\tx^k}^2 \\
    =& \delta_x\norm{\tx^k}^2 - \alpha\mu_x\norm{\tx^k}^2 \\
    =& \delta_xc_x\norm{\tx^k}^2 - \alpha\lt(\mu_x - \delta_x\frac{\beta\os^2(B)}{1-\beta\ove{P}}\rt)\norm{\tx^k}^2 \\
    \leq& \delta_xc_x\norm{\tx^k}^2,
  }
  where the inequality follows from \cref{1638}. Also note that $BB\T + \frac{1}{\alpha}P > 0$, it follows that
  \eqe{ \label{1704}
  \norm{\ty^k}^2 - \alpha\beta\norm{\ty^k}^2_{BB\T + \frac{1}{\alpha}P} \leq \lt(1-\alpha\beta\mine{BB\T + \frac{1}{\alpha}P}\rt)\norm{\ty^k}^2.
  }
  Using Weyl's inequality gives
  \eqe{ \label{1700}
    \mine{BB\T + \frac{1}{\alpha}P} \leq \mine{\frac{1}{\alpha}P} + \ove{BB\T} = \os^2(B),
  }
  where the equality holds since $\mine{\frac{1}{\alpha}P} = 0$. Let $\delta_y = 1-\alpha\beta\mine{BB\T + \frac{1}{\alpha}P}$, we immediately know $\delta_y \in (0, 1)$ from \cref{1638,1700}. Combining \cref{1702,1703,1704}, we can obtain
  \eqe{
    c_x\norm{\tx\+}^2 + c_y\norm{\tz\+}^2 \leq \delta\pa{c_x\norm{\tx^k}^2 + c_y\norm{\ty^k}^2},
  }
  where $\delta = \max\{\delta_x, \delta_y\} \in (0, 1)$. Finally, using
  $$\norm{\ty\+}^2 = \norm{\prox_{\beta g_2}(z\+)-\prox_{\beta g_2}(z^*)} \leq \norm{\tz\+}^2$$
  completes the proof.
\end{appendix_proof}

\section{Proof of \cref{2131}} \label{appendix_2131}
\begin{proof}
  To complete the proof, we will use the following two lemmas, which reveal the duality between the strong convexity of a function $f$ and the smoothness of its Fenchel conjugate $f^*$.
\begin{lemma} \cite[Lecture 5]{vandenberghe2022om} \label{conjugate_smooth}
  Assume that $f: \R^n \rightarrow \exs$ is closed proper and $\mu$-strongly convex with $\mu>0$, then (1) $\dom{f^*} = \R^n$; (2) $f^*$ is differentiable on $\R^n$ with $\nabla f^*(y) = \arg\max_{x \in \dom{f}} y\T x - f(x)$; (3) $f^*$ is convex and $\frac{1}{\mu}$-smooth.
\end{lemma}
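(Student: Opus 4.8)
The plan is to prove the three claims in order, exploiting the fact that $\mu$-strong convexity of $f$ is equivalent to convexity of $f - \frac{\mu}{2}\norm{\cdot}^2$, which yields the reinforced subgradient inequality $f(x') \geq f(x) + \dotprod{S_f(x), x'-x} + \frac{\mu}{2}\norm{x'-x}^2$ for every $S_f(x) \in \partial f(x)$. For claim (1), I would first note that since $f$ is proper convex and lower semicontinuous, $\partial f$ is nonempty at some point $x_0$; fixing $S_f(x_0)$, the reinforced inequality gives $y\T x - f(x) \leq y\T x - f(x_0) - \dotprod{S_f(x_0), x-x_0} - \frac{\mu}{2}\norm{x-x_0}^2$ for all $x$. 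The right-hand side is a strictly concave quadratic in $x$ that tends to $-\infty$ as $\norm{x}\to\infty$, so its supremum --- hence $f^*(y)$ --- is finite for every $y$. Combined with properness (which gives $f^*(y) > -\infty$), this establishes $\dom{f^*} = \R^n$.

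For claim (2), fix $y$ and consider $h_y(x) = f(x) - y\T x$, which is itself $\mu$-strongly convex, lower semicontinuous, and coercive; hence it attains its minimum at a unique point, giving a well-defined singleton $x^*(y) = \arg\max_x y\T x - f(x)$. The heart of the argument is the conjugate subgradient correspondence: since $f$ is closed proper convex, the biconjugate identity $f^{**}=f$ holds, and one has $x \in \partial f^*(y) \iff y \in \partial f(x) \iff x \in \arg\max_z y\T z - f(z)$. Therefore $\partial f^*(y) = \{x^*(y)\}$ is a singleton for every $y$. Because $\R^n = \dom{f^*}$ is open and $f^*$ is convex, a singleton subdifferential at $y$ forces $f^*$ to be differentiable at $y$ with $\nabla f^*(y)$ equal to that single element (the standard fact, e.g.\ Rockafellar, Thm.\ 25.1). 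This yields differentiability on all of $\R^n$ together with the stated formula $\nabla f^*(y) = x^*(y)$.

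For claim (3), convexity of $f^*$ is automatic, as a pointwise supremum of affine functions of $y$. For $\frac{1}{\mu}$-smoothness, take $y_1, y_2$ and set $x_i = \nabla f^*(y_i)$, so by (2) we have $y_i \in \partial f(x_i)$. Summing the two reinforced subgradient inequalities written at $x_1$ and $x_2$ yields the strong monotonicity estimate $\dotprod{y_1-y_2, x_1-x_2} \geq \mu\norm{x_1-x_2}^2$; combining this with Cauchy--Schwarz gives $\norm{x_1-x_2} \leq \frac{1}{\mu}\norm{y_1-y_2}$, i.e.\ $\nabla f^*$ is $\frac{1}{\mu}$-Lipschitz, which is precisely the claimed smoothness.

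The main obstacle is claim (2): the two facts I would lean on --- the correspondence $\partial f^*(y) = \arg\max_x y\T x - f(x)$ and the implication ``singleton subdifferential $\Rightarrow$ differentiability'' --- are exactly the nontrivial pieces of convex analysis, both ultimately resting on closedness of $f$ (so that $f^{**}=f$) and on $y$ lying in the interior of $\dom{f^*}$, which claim (1) guarantees is all of $\R^n$. Once these are invoked, claims (1) and (3) reduce to short computations. To stay self-contained I would, if needed, derive the correspondence from the Fenchel--Young equality $f(x)+f^*(y)=\dotprod{x,y}$ together with the attainment and uniqueness of the maximizer already established in claim (2), rather than citing it as a black box.
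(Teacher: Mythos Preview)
Your proof is correct and follows the standard convex-analysis route: coercivity from strong convexity for (1), the Fenchel--Young / subgradient inversion $\partial f^*(y)=\arg\max_x\,y\T x-f(x)$ together with uniqueness of the maximizer for (2), and strong monotonicity of $\partial f$ for the Lipschitz bound in (3). The paper, however, does not prove this lemma at all: it is stated with a citation to \cite[Lecture 5]{vandenberghe2022om} and invoked as a black box in the proof of \cref{2131}. So there is nothing to compare against; your argument simply supplies a self-contained proof where the paper relies on an external reference.
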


\begin{lemma} \cite[Theorem E.4.2.2]{hiriart2004fundamentals} \label{conjugate_strong_convex}
  Assume that $f: \R^n \rightarrow \R$ is convex and $L$-smooth with $L>0$, then $f^*$ is $\frac{1}{L}$-strongly convex on every convex set $\mathcal{Y} \subseteq \dom{\partial f^*}$, where $\dom{\partial f^*} = \set{y \in \R^n | \partial f^*(y) \neq \emptyset}$.
\end{lemma}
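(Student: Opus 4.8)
The plan is to prove the defining first-order (subgradient) inequality of $\frac{1}{L}$-strong convexity for $f^*$ directly, using Fenchel--Young duality together with the sharpened gradient inequality enjoyed by convex $L$-smooth functions. Fixing a convex $\mathcal{Y} \subseteq \dom{\partial f^*}$, it suffices to establish that for all $y, y' \in \mathcal{Y}$ and every $s \in \partial f^*(y)$, $f^*(y') \geq f^*(y) + \dotprod{s, y'-y} + \frac{1}{2L}\norm{y'-y}^2$. On a convex set this subgradient form is equivalent to the convex-combination definition of $\frac{1}{L}$-strong convexity, and the convexity hypothesis on $\mathcal{Y}$ is precisely what guarantees that intermediate points of the segment $[y,y']$ remain in $\dom{\partial f^*}$, so that the notion is well posed --- note that $\dom{\partial f^*}$ equals $\mathrm{range}(\nabla f)$, which need not be convex on its own.

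The first tool is the inverse subdifferential relation. Since $f$ is finite-valued and $L$-smooth on $\R^n$, it is closed, convex, and differentiable, so $\partial f(x) = \{\nabla f(x)\}$. For a closed proper convex function one has $s \in \partial f^*(y) \iff y \in \partial f(s) \iff f(s) + f^*(y) = \dotprod{s, y}$; hence $s \in \partial f^*(y)$ is equivalent to $y = \nabla f(s)$, and the Fenchel--Young equality $f^*(y) = \dotprod{s, y} - f(s)$ holds. Picking $s \in \partial f^*(y)$ and $s' \in \partial f^*(y')$ therefore yields $y = \nabla f(s)$ and $y' = \nabla f(s')$, together with $f^*(y) = \dotprod{s,y} - f(s)$ and $f^*(y') = \dotprod{s',y'} - f(s')$.

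The second tool is the sharpened gradient inequality for convex $L$-smooth functions (equivalently, $\frac{1}{L}$-co-coercivity of $\nabla f$, i.e.\ the Baillon--Haddad theorem): for all $u, v \in \R^n$, $f(u) \geq f(v) + \dotprod{\nabla f(v), u-v} + \frac{1}{2L}\norm{\nabla f(u) - \nabla f(v)}^2$, which I would either invoke as standard or recover from the descent lemma. Substituting the two Fenchel--Young equalities into $f^*(y') - f^*(y) - \dotprod{s, y'-y}$ reduces it, after the cross terms cancel, to $\dotprod{s'-s, y'} - f(s') + f(s)$; applying the sharpened inequality with $u = s$, $v = s'$ (so that $\nabla f(s') = y'$) lower-bounds this by $\frac{1}{2L}\norm{\nabla f(s) - \nabla f(s')}^2 = \frac{1}{2L}\norm{y-y'}^2$, which is exactly the claimed inequality.

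The only substantive obstacle is the sharpened smooth-convex inequality (equivalently co-coercivity); the rest is Fenchel-duality bookkeeping and care that every point invoked lies in $\dom{\partial f^*}$. A clean alternative avoids that inequality: plain co-coercivity of $\nabla f$ gives, after the substitution $y = \nabla f(s)$, $y' = \nabla f(s')$, the $\frac{1}{L}$-strong monotonicity $\dotprod{s-s', y-y'} \geq \frac{1}{L}\norm{y-y'}^2$ of $\partial f^*$; integrating this along the segment $[y,y'] \subseteq \mathcal{Y}$ --- using that $t \mapsto f^*((1-t)y + ty')$ is finite and convex, hence absolutely continuous with a.e.\ derivative $\dotprod{s_t, y'-y}$ for $s_t \in \partial f^*$ --- recovers the same strong-convexity inequality. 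Either route hinges on the convexity of $\mathcal{Y}$ to keep the whole segment inside $\dom{\partial f^*}$.
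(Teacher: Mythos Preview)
The paper does not supply its own proof of this lemma; it is stated with a citation to \cite{hiriart2004fundamentals} and then used as a black box in the proof of \cref{2131}. So there is no ``paper's proof'' to compare against.

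Your argument is correct. The computation $f^*(y')-f^*(y)-\dotprod{s,y'-y}=\dotprod{s'-s,y'}+f(s)-f(s')$ via the Fenchel--Young equalities, followed by the sharpened convex-smooth inequality $f(s)\ge f(s')+\dotprod{\nabla f(s'),s-s'}+\tfrac{1}{2L}\norm{\nabla f(s)-\nabla f(s')}^2$ with $\nabla f(s)=y$, $\nabla f(s')=y'$, yields exactly the $\tfrac{1}{L}$-strong-convexity subgradient inequality for $f^*$. Your observation that the convexity of $\mathcal{Y}$ is only needed to pass from this subgradient inequality to the convex-combination definition (and that $\dom{\partial f^*}=\mathrm{range}(\nabla f)$ need not itself be convex) is also correct; the subgradient inequality itself holds for any pair $y,y'\in\dom{\partial f^*}$ without invoking convexity of the ambient set. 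Your alternative route via strong monotonicity of $\partial f^*$ and integration along the segment is equally valid and is the place where convexity of $\mathcal{Y}$ enters substantively.
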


Let $\phi(y) = (f_1+f_2)^*(-B\T y)$. According to \cref{basic_assump}, $f_1+f_2$ is closed proper and $\mu_x$-strongly convex. By \cref{conjugate_smooth}, we conclude that $(f_1+f_2)^*$ is $\frac{1}{\mu_x}$-smooth. Consequently, $\phi$ is differentiable everywhere. Utilizing the smoothness of $f_1+f_2$, we have
\eqe{
  &\dotprod{\nabla \phi(y)-\nabla \phi(y'), y-y'} \\
  =& \dotprod{\nabla (f_1 + f_2)^*(-B\T y)-\nabla (f_1 + f_2)^*(-B\T y'), -B\T(y-y')} \\
  \leq& \frac{1}{\mu_x}\norm{B\T(y-y')}^2 \\
  \leq& \frac{\os^2(B)}{\mu_x}\norm{y-y'}^2, \ \forall y, y' \in \R^p.
}
Thus, $\phi$ is $\frac{\os^2(B)}{\mu_x}$-smooth. Since $g_1$ is $L_y$-smooth, it follows tha $\varphi$ is $\pa{L_y+\frac{\os^2(B)}{\mu_x}}$-smooth.

We will now prove the strong convexity of $\varphi$ for the cases mentioned above. The first case is straightforward. In the second case, since $f_1+f_2$ is $\mu_x$-strongly convex and $L_x$-smooth, it follows from \cref{conjugate_smooth,conjugate_strong_convex} that $(f_1+f_2)^*$ is $\frac{1}{L_x}$-strongly convex on $\R^p$. Using the strong convexity of $(f_1+f_2)^*$ yields
\eqe{
  &\dotprod{\nabla \phi(y)-\nabla \phi(y'), y-y'} \\
  =& \dotprod{\nabla (f_1+f_2)^*(-B\T y)-\nabla (f_1+f_2)^*(-B\T y'), -B\T(y-y')} \\
  \geq& \frac{1}{L_x}\norm{B\T(y-y')}^2 \\
  \geq& \frac{\mins^2(B)}{L_x}\norm{y-y'}^2, \ \forall y, y' \in \R^p,
}
where $\frac{\mins^2(B)}{L_x} > 0$ since $B$ has full row rank. Therefore, $\phi$ is $\frac{\mins^2(B)}{L_x}$-strongly convex, and so is $\varphi$.

For the third case, we again have $(f_1+f_2)^*$ being $\frac{1}{L_x}$-strongly convex on $\R^p$. Similarly, we obtain
\eqe{
  &\dotprod{\nabla \phi(y)-\nabla \phi(y'), y-y'} \\
  \geq& \frac{1}{L_x}\norm{B\T(y-y')}^2 \\
  \geq& \frac{\us^2(B)}{L_x}\norm{y-y'}^2, \ \forall y, y' \in \Range{B},
}
hence $\phi$ is $\frac{\us^2(B)}{L_x}$-strongly convex on $\Range{B}$, and so is $\varphi$.

For the last case, by \cref{main_pro_assumption}, we have $\mine{BB\T + L_xP} > 0$. It follows that
\eqe{
  &\dotprod{\nabla \varphi(y)-\nabla \varphi(y'), y-y'} \\
  =& \dotprod{\nabla (f_1+f_2)^*(-B\T y)-\nabla (f_1+f_2)^*(-B\T y'), -B\T(y-y')} + (y-y')\T P(y-y') \\
  &+ \dotprod{\nabla g_3(y) - \nabla g_3(y'), y-y'} \\
  \geq& \frac{1}{L_x}(y-y')\T(BB\T + L_xP)(y-y') \\
  \geq& \frac{\mine{BB\T + L_xP}}{L_x}\norm{y-y'}^2, \ \forall y, y' \in \R^p.
}
Hence, $\varphi$ is $\frac{\mine{BB\T + L_xP}}{L_x}$-strongly convex.
\end{proof}

\section{Proof of \cref{iDAPG_convergence}} \label{appendix_iDAPG_convergence}
\begin{lemma} \label{2236}
  Under the same assumptions and conditions with \cref{iDAPG_convergence}, $y^k$ generated by iDAPG satisfies
  \eqe{
    \varPhi(y^k) - \varPhi(y^*) \leq \lt(1-\frac{1}{\sqrt{\kappa_{\varphi}}}\rt)^k\lt(\sqrt{2(\varPhi(y^0) - \varPhi(y^*))} + \sqrt{\frac{2}{\mu_{\varphi}}}\mathcal{E}_k\rt)^2, \ \forall k \geq 1,
  }
  where $\mathcal{E}_k = \sum_{i=1}^k\lt(1-\frac{1}{\sqrt{\kappa_{\varphi}}}\rt)^{-i/2}\epsilon_i$.
\end{lemma}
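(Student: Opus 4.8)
The plan is to recognize iDAPG as an instance of the inexact accelerated proximal gradient (APG) method applied to the dual problem $\min_{y} \varPhi(y) = \varphi(y) + g_2(y)$ established in \cref{2158}, and then to invoke the strongly-convex-case convergence guarantee for inexact APG from \cite{schmidt2011convergence}. By \cref{2131}, $\varphi$ is $L_{\varphi}$-smooth, and by hypothesis it is $\mu_{\varphi}$-strongly convex, so $\sqrt{\mu_{\varphi}/L_{\varphi}} = 1/\sqrt{\kappa_{\varphi}}$, which is exactly the contraction factor appearing in the claimed bound. The momentum parameter $\beta_k = \frac{\sqrt{\kappa_{\varphi}}-1}{\sqrt{\kappa_{\varphi}}+1}$ and the proximal step with stepsize $1/L_{\varphi}$ in iDAPG are precisely those of the standard strongly convex APG scheme, so the only departure from the exact method lies in the gradient.

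First I would make the inexact gradient explicit. Recalling that $\nabla \varphi(z^k) = \nabla g_1(z^k) - B x^*(z^k)$ with $x^*(z^k)$ the exact minimizer in \cref{2346}, while iDAPG forms $\tilde{\nabla}\varphi(z^k) = \nabla g_1(z^k) - B x\+$ from the inexact solution $x\+$ of \cref{1356}, the gradient error is $e\+ = \tilde{\nabla}\varphi(z^k) - \nabla \varphi(z^k) = B\pa{x^*(z^k) - x\+}$. The error condition \cref{2229} then yields $\norm{e\+} \leq \os(B)\norm{x\+ - x^*(z^k)} \leq \varepsilon_{k+1}$. Since $g_2$ is proximal-friendly, the proximal operator is evaluated exactly, so iDAPG is exactly inexact APG for a smooth strongly convex plus nonsmooth objective with gradient-error sequence bounded by $\set{\varepsilon_k}$ and \emph{no} proximal error — consistent with the fact that the accumulated quantity $\mathcal{E}_k$ involves only the $\varepsilon_i$.

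With this identification, the result follows from the strongly convex inexact APG analysis. I would set up the standard estimate-sequence (or Lyapunov) recursion for APG and carry the error $e\+$ through the descent inequality; after dividing the recursion by the per-step contraction factor $1-1/\sqrt{\kappa_{\varphi}}$ and telescoping, the error enters multiplicatively through the weights $\pa{1-1/\sqrt{\kappa_{\varphi}}}^{-i/2}$, producing $\mathcal{E}_k = \sum_{i=1}^k \pa{1-1/\sqrt{\kappa_{\varphi}}}^{-i/2}\varepsilon_i$. Matching the initial term $\sqrt{2(\varPhi(y^0)-\varPhi(y^*))}$ and the modulus factor $\sqrt{2/\mu_{\varphi}}$ against the strong-convexity constant then delivers the claimed inequality.

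The main obstacle will be aligning the constants in the cited bound exactly with the stated form: \cite{schmidt2011convergence} handles both gradient and proximal inexactness via an approximate-proximal-point notion, so I would need to specialize their estimate to exact prox steps and confirm that the gradient error enters the estimate-sequence recursion through the factor $\pa{1-1/\sqrt{\kappa_{\varphi}}}^{-i/2}$, so that summation collapses cleanly into $\mathcal{E}_k$ with the precise prefactors $\sqrt{2}$ and $\sqrt{2/\mu_{\varphi}}$. If the cited statement does not match verbatim, I would instead reconstruct the estimate-sequence argument directly, tracking the error term by term through the momentum recursion; this bookkeeping of the accumulated gradient error is the delicate part of the argument.
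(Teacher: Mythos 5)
Your proposal takes essentially the same route as the paper: the paper's proof likewise notes that $\varphi$ is $\left(L_y + \frac{\os^2(B)}{\mu_x}\right)$-smooth, observes that \cref{2229} yields $\norm{B(x^{k+1}-x^*(z^k))} \leq \varepsilon_{k+1}$ so that iDAPG is an inexact APG applied to $\varPhi$ with exact proximal steps, and then directly invokes \cite[Proposition 4]{schmidt2011convergence}. Your explicit identification of the gradient error $B\left(x^*(z^k)-x^{k+1}\right)$ and the specialization to zero proximal error are precisely the (implicit) steps under which that proposition gives the stated bound verbatim, so the fallback estimate-sequence reconstruction you mention is not needed.
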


\begin{proof}
  Given \cref{basic_assump}, $(f_1+f_2)^*(-B\T y)$ is $\frac{\os^2(B)}{\mu_x}$-smooth, then $\varphi$ is $\pa{\frac{\os^2(B)}{\mu_x} + L_y}$-smooth. According to \cref{2229}, we have
  \eqe{
    \norm{B\pa{x\+-x^*(z^k)}} \leq \varepsilon_{k+1}.
  }
  Therefore, iDAPG can be interpreted as an inexact APG applied to $\varPhi$, allowing us to utilize \cite[Proposition 4]{schmidt2011convergence} to complete the proof.
\end{proof}

\begin{lemma} \label{1432}
  Under the same assumptions and conditions with \cref{iDAPG_convergence}, $x\+$ generated by iDAPG satisfies
  \eqe{
  \norm{x\+ - x^*}^2 \leq \frac{2\os^2(B)}{\mu^2_x}\lt(\lt(1+\frac{\sqrt{\kappa_{\varphi}}-1}{\sqrt{\kappa_{\varphi}}+1}\rt)\norm{y^k-y^*} + \frac{\sqrt{\kappa_{\varphi}}-1}{\sqrt{\kappa_{\varphi}}+1}\norm{y^{k-1}-y^*}\rt)^2 + \frac{{2\varepsilon^2_{k+1}}}{\os^2(B)}, \ \forall k \geq 1.
  }
\end{lemma}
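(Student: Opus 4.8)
The plan is to decompose $\norm{x\+ - x^*}^2$ into an optimization-error part and an exact-solution-drift part, and bound each separately. First I would apply the inequality $\norm{a+b}^2 \leq 2\norm{a}^2 + 2\norm{b}^2$ with $a = x\+ - x^*(z^k)$ and $b = x^*(z^k) - x^*$ to obtain $\norm{x\+ - x^*}^2 \leq 2\norm{x\+ - x^*(z^k)}^2 + 2\norm{x^*(z^k) - x^*}^2$. The first term is controlled directly by the error condition \cref{2229}, giving $2\norm{x\+ - x^*(z^k)}^2 \leq \frac{2\varepsilon^2_{k+1}}{\os^2(B)}$, which already matches the trailing term of the claimed bound. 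It remains to handle the drift term $\norm{x^*(z^k) - x^*}^2$.

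The heart of the argument, and the step I expect to be the main obstacle, is showing that the solution map $y \mapsto x^*(y)$ defined in \cref{2346} is $\frac{\os(B)}{\mu_x}$-Lipschitz. I would establish this by writing the first-order optimality conditions at two points $y, y'$: there exist $s \in \partial f_2(x^*(y))$ and $s' \in \partial f_2(x^*(y'))$ with $\nabla f_1(x^*(y)) + B\T y + s = \0$ and $\nabla f_1(x^*(y')) + B\T y' + s' = \0$. Subtracting these, taking the inner product with $x^*(y) - x^*(y')$, and invoking the $\mu_x$-strong convexity of $f_1$ together with the monotonicity of $\partial f_2$ yields $\mu_x \norm{x^*(y) - x^*(y')}^2 \leq \dotprod{y' - y, B\pa{x^*(y) - x^*(y')}}$. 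Applying Cauchy--Schwarz and $\norm{B\pa{x^*(y) - x^*(y')}} \leq \os(B)\norm{x^*(y) - x^*(y')}$, then dividing through by $\norm{x^*(y) - x^*(y')}$, gives $\norm{x^*(y) - x^*(y')} \leq \frac{\os(B)}{\mu_x}\norm{y - y'}$. This is a standard perturbation estimate for strongly convex problems, but it is the essential quantitative ingredient, since it is what converts the dual iterate gap into a primal iterate gap with the factor $\frac{\os^2(B)}{\mu_x^2}$ appearing in the statement.

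With the Lipschitz property in hand, I would set $y = z^k$ and $y' = y^*$ and use the identity $x^* = x^*(y^*)$ (which holds because the saddle-point condition $\0 \in \nabla f_1(x^*) + B\T y^* + \partial f_2(x^*)$ is exactly the optimality condition for \cref{2346} at $y = y^*$) to get $\norm{x^*(z^k) - x^*}^2 \leq \frac{\os^2(B)}{\mu_x^2}\norm{z^k - y^*}^2$. The final step expands the extrapolation point: since $z^k = y^k + \beta_k\pa{y^k - y^{k-1}}$, we have $z^k - y^* = (1+\beta_k)\pa{y^k - y^*} - \beta_k\pa{y^{k-1} - y^*}$, so the triangle inequality gives $\norm{z^k - y^*} \leq (1+\beta_k)\norm{y^k - y^*} + \beta_k\norm{y^{k-1} - y^*}$. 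Substituting $\beta_k = \frac{\sqrt{\kappa_{\varphi}}-1}{\sqrt{\kappa_{\varphi}}+1}$ and combining with the error term recovers the claimed inequality exactly. I expect the Lipschitz estimate to be the only nontrivial piece; the remaining manipulations are elementary expansions of the momentum update.
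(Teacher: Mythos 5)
Your proposal is correct and follows essentially the same route as the paper: the same decomposition $\norm{x^{k+1}-x^*}^2 \leq 2\norm{x^{k+1}-x^*(z^k)}^2 + 2\norm{x^*(z^k)-x^*}^2$, the same $\frac{\os(B)}{\mu_x}$-Lipschitz perturbation bound on $y \mapsto x^*(y)$ applied at $z^k$ and $y^*$, and the same triangle-inequality expansion of the momentum point $z^k$. The only difference is cosmetic: where the paper invokes \cite[Lemma 3]{zhou2018fenchel} for the Lipschitz estimate, you prove it inline from the subtracted optimality conditions, strong monotonicity of $\nabla f_1$, and monotonicity of $\partial f_2$, which is a correct and self-contained substitute.
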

\begin{proof}
  According to the definitions of $x^*$ and $x^*(z^k)$, we have
  \eqe{
    -B\T y^* \in \nabla f_1(x^*) + \partial f_2(x^*), \\
    -B\T z^k \in \nabla f_1(x^*(z^k)) + \partial f_2(x^*(z^k)).
  }
  Note that $f_1+f_2$ is $\mu_x$-strongly convex, using \cite[Lemma 3]{zhou2018fenchel} gives
  \eqe{ \label{1501}
    \norm{x^*(z^k)-x^*} \leq& \frac{1}{\mu_x}\norm{B\T\pa{z^k-y^*}} \\
    \leq& \frac{\os(B)}{\mu_x}\norm{z^k-y^*} \\
    \overset{\text{iDAPG}}\leq& \frac{\os(B)}{\mu_x}\pa{\pa{1+\frac{\sqrt{\kappa_{\varphi}}-1}{\sqrt{\kappa_{\varphi}}+1}}\norm{y^k-y^*} + \frac{\sqrt{\kappa_{\varphi}}-1}{\sqrt{\kappa_{\varphi}}+1}\norm{y^{k-1}-y^*}}.
  }
  Combining the above inequality with
  \eqe{
    \norm{x\+ - x^*}^2 \leq 2\norm{x\+ - x^*(z^k)}^2 + 2\norm{x^*(z^k) - x^*}^2
  }
  completes the proof.
\end{proof}

\begin{appendix_proof}[\cref{iDAPG_convergence}]
  According to \cref{1535}, we have
  \eqe{ \label{1500}
    \mathcal{E}_k &= \sum_{i=1}^k\lt(1-\frac{1}{\sqrt{\kappa_{\varphi}}}\rt)^{-i/2}\epsilon_i = \frac{\varepsilon_1}{\sqrt{\theta}}\sum_{i=1}^k\lt(\frac{\theta}{1-\frac{1}{\sqrt{\kappa_{\varphi}}}}\rt)^{i/2} < \frac{\varepsilon_1}{\sqrt{1-\frac{1}{\sqrt{\kappa_{\varphi}}}}-\sqrt{\theta}}, \ \text{if} \ \theta < 1-\frac{1}{\sqrt{\kappa_{\varphi}}}, \\
    \mathcal{E}_k &= \frac{\pa{\lt(\frac{\theta}{1-\frac{1}{\sqrt{\kappa_{\varphi}}}}\rt)^{k/2}-1}\varepsilon_1}{\sqrt{\theta}-\sqrt{1-\frac{1}{\sqrt{\kappa_{\varphi}}}}} < \frac{\varepsilon_1}{\sqrt{\theta}-\sqrt{1-\frac{1}{\sqrt{\kappa_{\varphi}}}}}\lt(\frac{\theta}{1-\frac{1}{\sqrt{\kappa_{\varphi}}}}\rt)^{k/2}, \ \text{if} \ \theta > 1-\frac{1}{\sqrt{\kappa_{\varphi}}}, \\
    \mathcal{E}_k &= k\frac{\varepsilon_1}{\sqrt{\theta}}, \ \text{if} \ \theta = 1-\frac{1}{\sqrt{\kappa_{\varphi}}}.
  }
  Since $\varphi$ is $\mu_{\varphi}$-strongly convex, so is $\varPhi$, which implies that
  \eqe{ \label{2248}
    \norm{y^k-y^*}^2 \leq \frac{2}{\mu_{\varphi}}\pa{\varPhi(y^k) - \varPhi(y^*)}.
  }
  Then, we can complete the proof via combining \cref{2236,1500}, \cref{1432,2248}, and \cref{1535}.
\end{appendix_proof}

\section{Proof of \cref{iDAPG_complexity}} \label{appendix_iDAPG_complexity}
\begin{appendix_proof}[\cref{iDAPG_complexity}]
  Note that $\theta > 1-\frac{1}{\sqrt{\kappa_{\varphi}}}$. According to \cref{1501}, \cref{2236}, and \cref{1500}, we have
  \eqe{
    &\norm{x^*(z^k)-x^*}^2 \\
    <& C_1\lt(\sqrt{2(\varPhi(y^0) - \varPhi(y^*))} + \frac{\sqrt{\frac{2}{\mu_{\varphi}}}\varepsilon_1}{\sqrt{\theta}-\sqrt{1-\frac{1}{\sqrt{\kappa_{\varphi}}}}}\lt(\frac{\theta}{1-\frac{1}{\sqrt{\kappa_{\varphi}}}}\rt)^{k/2}\rt)^2\lt(1-\frac{1}{\sqrt{\kappa_{\varphi}}}\rt)^k \\
    \overset{\cref{epsilon_1}}\leq& C_2\varepsilon^2_1\theta^k,
  }
  where $C_1 = \frac{2\os^2(B)}{\mu^2_x\mu_{\varphi}}\lt(\frac{\sqrt{\kappa_{\varphi}}\pa{2+\sqrt{1-\frac{1}{\sqrt{\kappa_{\varphi}}}}}}{\sqrt{\kappa_{\varphi}}+1}\rt)^2$ and $C_2 = \frac{8C_1}{\mu_{\varphi}\pa{\sqrt{\theta}-\sqrt{1-\frac{1}{\sqrt{\kappa_{\varphi}}}}}^2}$.
  It follows that
  \eqe{ \label{2041}
    \norm{x\+ - x^*}^2 \leq& 2\norm{x^*(z^k)-x^*}^2 + \frac{{2\varepsilon^2_1\theta^k}}{\os^2(B)} \\
    &< C_3\varepsilon^2_1\theta^k,
  }
  where $C_3 = 2\pa{C_2+\frac{1}{\os^2(B)}}$. By \cref{epsilon_1,2041}, we immediately obtain \cref{1456}, where
  $$\mathcal{C}=\frac{32\os^2(B)\kappa_{\varphi}\pa{2+\sqrt{1-\frac{1}{\sqrt{\kappa_{\varphi}}}}}^2}{\mu^2_x\mu_{\varphi}\pa{\sqrt{\kappa_{\varphi}}+1}^2} + \frac{2\mu_{\varphi}}{\os^2(B)}\pa{\sqrt{\theta}-\sqrt{1-\frac{1}{\sqrt{\kappa_{\varphi}}}}}^2.$$

  We now proceed to prove \cref{1457}. Define $F_k(x) = f_1(x) + f_2(x) + \dotprod{B\T z^k, x}$. Consider the APG given in \cite[Lecture 7]{vandenberghe2022om}, with $\theta_0 = 1$ initialized \footnote{We should note that setting $\theta_0 = 1$ is not necessary if $f_2 = 0$. For the problem $\min_x f(x) = g(x) + h(x)$, where $g$ is $\mu$-strongly convex and $L$-smooth, we define $\kappa = \frac{L}{\mu}$. APG in \cite[Lecture 7]{vandenberghe2022om} satisfies $f(x^k) - f^* \leq \pa{1-\frac{1}{\sqrt{\kappa}}}^{k-1}\pa{(1-\theta_0)(f(x^0)-f^*) + \frac{\theta_0^2}{2t_0}\norm{x_0-x^*}^2}$ for $\theta_0 \in (0, 1]$. We set $\theta_0 = 1$ to eliminate $f(x^0)-f^*$ from the upper bound. However, when $f_2 = 0$ (i.e., $h = 0$), we can bound $f(x^0)-f^*$ using $\norm{x_0-x^*}^2$, making it unnecessary to set $\theta_0 = 1$.} and let $x^k$ serve as the initialized solution of APG at the $k$-th iteration. Since $f_1(x) + \dotprod{B\T z^k, x}$ is $\mu_x$-strongly convex and $L_x$-smooth, the number of iterations of APG required to guarantee
  \eqe{ \label{2058}
  F_k(x\+) - F_k(x^*(z^k)) \leq \frac{\mu_x}{2}\varepsilon^2_{k+1}
  }
  is bounded by
  \eqe{
  \sqrt{\kappa_x}\log \pa{\frac{\kappa_x\norm{x^k-x^*(z^k)}^2}{\varepsilon^2_{k+1}}} + 1.
  }
  Note that $F_k$ is also $\mu_x$-strongly convex, we then have
  \eqe{
    \norm{x\+ - x^*(z^k)}^2 \leq \frac{2}{\mu_x}\pa{F_k(x\+) - F_k(x^*(z^k))}.
  }
  Hence, \cref{2058} is sufficient to ensure $\norm{x\+ - x^*(z^k)}^2 \leq \varepsilon^2_{k+1}$.
  Note that
  \eqe{
    \norm{x^k - x^*(z^k)}^2 \leq& 2\norm{x^k - x^*}^2 + 2\norm{x^*(z^k)-x^*}^2 \\
    <& 2C_3\varepsilon^2_1\theta^{k-1} + 2C_2\varepsilon^2_1\theta^k \\
    \leq& 2\pa{\theta C_2+C_3}\varepsilon^2_k,
  }
  then we can obtain
  \eqe{
  \frac{\norm{x^k - x^*(z^k)}^2}{\varepsilon^2_{k+1}} \overset{\cref{1535}}< \frac{2\pa{\theta C_2+C_3}}{\theta}.
  }
  Therefore, the number of APG iterations (i.e., inner iterations) at the $k$-th iteration of iDAPG is bounded by $\sqrt{\kappa_x}\log \pa{\frac{2\pa{\theta C_2+C_3}\kappa_x}{\theta}} + 1$ for all $k \geq 1$. This completes the proof.
\end{appendix_proof}

\end{document}